\newtheorem{thm}{Theorem}[section]
\newtheorem{lem}[thm]{Lemma}
\newtheorem{prop}[thm]{Proposition}
\theoremstyle{definition}
\theoremstyle{remark}
\newtheorem{rem}[thm]{Remark}
\numberwithin{equation}{section}
\newtheorem{ass}{Assumption}
\begin{document}
\title[Linear convergence property of delayed gradient descent]{Non-ergodic linear convergence property of the delayed gradient descent under the strongly convexity and the Polyak-{\L}ojasiewicz condition} 
 
\author{Hyung Jun Choi}
\address[Hyung Jun Choi]{School of Liberal Arts, Korea University of Technology and Education, Cheonan 31253, Republic of Korea}
\email{hjchoi@koreatech.ac.kr; choihjs@gmail.com}

\author{Woocheol Choi}
\address[Woocheol Choi]{
Department of Mathematics, Sungkyunkwan University,  Suwon 16419,   Republic of Korea  }
\email{choiwc@skku.edu}

\author{Jinmyoung Seok}
\address[Jinmyoung Seok]{Department of Mathematics Education, Seoul National University,  Seoul 08826, Republic of Korea}
\email{jmseok@kgu.ac.kr}

\subjclass[2010]{90C25, 68Q25}

\keywords{Convex programming, Gradient descent method, Time delay}

\maketitle

\begin{abstract}
In this work, we establish the linear convergence estimate for the gradient descent involving the delay $\tau\in\mathbb{N}$ when the cost function is $\mu$-strongly convex and $L$-smooth. This result improves upon the well-known estimates in Arjevani et al. \cite{ASS} and Stich-Karmireddy \cite{SK} in the sense that it is non-ergodic and is still established in spite of weaker constraint of cost function. Also, the range of learning rate $\eta$ can be extended from $\eta\leq 1/(10L\tau)$ to $\eta\leq 1/(4L\tau)$ for $\tau =1$ and $\eta\leq 3/(10L\tau)$ for $\tau \geq 2$, where $L >0$ is the Lipschitz continuity constant of the gradient of cost function.
In a further research, we show the linear convergence of cost function under the Polyak-{\L}ojasiewicz\,(PL) condition, for which the available choice of learning rate is further improved as $\eta\leq 9/(10L\tau)$ for the large delay $\tau$. The framework of the proof for this result is also extended to the stochastic gradient descent with time-varying delay under the PL condition. Finally, some numerical experiments are provided in order to confirm the reliability of the analyzed results.
\end{abstract}

\section{Introduction}

In this paper, we consider the gradient descent with a fixed delay $\tau \in \mathbb{N}$ in the optimization problem:
\begin{equation}\label{eq-1-1}
x_{t+1}=\left\{\begin{aligned}
&x_t - \eta \nabla f(x_{t-\tau})&&\textrm{~for~}~t \geq \tau,\\
&x_0&&\textrm{~for~}~0\leq t <\tau,
\end{aligned}\right.
\end{equation}
where the given cost $f:\mathbb{R}^d \rightarrow \mathbb{R}$ is a differentiable convex function and the initial point $x_0 \in \mathbb{R}^d$ is given. Throughout this paper, denote by $x_*\in\mathbb{R}^d$ a unique minimizer of the cost function $f$.

\vspace{0.1cm}

The gradient descent scheme \eqref{eq-1-1} has gained a lot of interest from many researchers since it has a simple form among the gradient descent algorithms involving the delay which naturally appear in various learning problems such as the asynchronous gradient descent and the multi-agent optimization. So far, numerous related studies have been investigated. The delay effect in the stochastic gradient descent was studied in several literatures \cite{ASS, KSJ, LLXWZ, SK, SYL, XZYZC}, and moreover, some studies of the delay effect of communication in the decentralized optimization could be found in the references \cite{AD,SY, WX}.
Also, the online problem involving the feedback delay was studied in \cite{CB, JGS, LSZ, QK,  SS, WX, WTZ}. Understanding the convergence analysis of \eqref{eq-1-1} will be fundamentally important, as it serves as the foundation for numerous other related algorithms employed in the aforementioned applications. The delay also arises in other algorithms such as the coordinate descent method and the federated learning (refer to \cite{JWRBS, WYZF, YB, ZLLLH}).

\vspace{0.1cm}

This paper concerns the linear convergence property of \eqref{eq-1-1}.
We first recall two previous basic results of \cite{ASS} and \cite{SK} in this line of research. 
Arjevani et al. \cite{ASS} provided the following linear convergence result when the cost is given by a strongly convex quadratic function.

\begin{thm}[\cite{ASS}]Assume that $f:\mathbb{R}^d\rightarrow\mathbb{R}$ is $\mu$-strongly convex and $L$-smooth, and suppose that $f$ is given as $f(x) = 2^{-1}\,x^T Ax +b^T x +c$ for some $A\in\mathbb{R}^{d\times d}$, $b\in\mathbb{R}^d$ and $c\in\mathbb{R}$.
Then, for a positive stepsize $\eta\leq \frac{1}{20L(\tau+1)}$ it holds that 
 \begin{equation*}
 f(x_t) - f(x_*) \leq 5 L (1-\mu \eta)^{2t} \|x_0 -x_*\|^2\qquad \forall\, t \geq 0.
 \end{equation*}
\end{thm}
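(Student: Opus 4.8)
The plan is to use the quadratic structure to linearize the iteration, reduce everything to a scalar delay recurrence, and read off the decay rate from its characteristic roots. Since $f$ is quadratic I may take $A$ symmetric and write $\nabla f(x)=A(x-x_*)$, because $\nabla f(x_*)=0$ forces $b=-Ax_*$. Setting $e_t:=x_t-x_*$, the scheme \eqref{eq-1-1} becomes the linear delay recurrence $e_{t+1}=e_t-\eta A e_{t-\tau}$ for $t\ge\tau$, with $e_0=\cdots=e_\tau=x_0-x_*$. Strong convexity and $L$-smoothness put the eigenvalues of $A$ in $[\mu,L]$, so diagonalizing $A$ in an orthonormal eigenbasis splits the vector recurrence into $d$ independent scalar recurrences
\[
u_{t+1}=u_t-\eta\lambda\,u_{t-\tau},\qquad \lambda\in[\mu,L],
\]
each with constant initial segment $u_0=\cdots=u_\tau$. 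Because $f(x_t)-f(x_*)=\tfrac12 e_t^{T}Ae_t\le \tfrac{L}{2}\|e_t\|^2$ and $\|e_t\|^2$ is the sum of squares of the scalar components, it suffices to prove the uniform scalar estimate $|u_t|\le\sqrt{10}\,(1-\mu\eta)^t|u_0|$ for every $\lambda\in[\mu,L]$; summing over modes then yields $\|e_t\|^2\le 10(1-\mu\eta)^{2t}\|e_0\|^2$ and hence the claimed bound with constant $5L$.

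For the scalar core I would work with the characteristic equation $z^{\tau+1}-z^{\tau}+\eta\lambda=0$, substituting $z=1-w$ to rewrite it as $w(1-w)^{\tau}=\eta\lambda$ and then showing that every root satisfies $|z|\le 1-\mu\eta$. The dominant root is the small real solution $w=\eta\lambda+\tau(\eta\lambda)^2+O((\eta\lambda)^3)$, giving $z=1-\eta\lambda-\tau\eta^2\lambda^2-\cdots$; the key point is that the leading delay correction is \emph{negative}, so this root lies strictly inside $1-\eta\lambda\le 1-\mu\eta$. The remaining $\tau$ roots are the perturbations of the $\tau$-fold root at $z=0$, of modulus $\approx(\eta\lambda)^{1/\tau}$, and here the hypothesis $\eta\le\frac{1}{20L(\tau+1)}$ is exactly what forces $(\eta L)^{1/\tau}\le 1-\mu\eta$, keeping these subdominant roots inside the disk of radius $1-\mu\eta$ as well.

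Once all characteristic roots are confined to $\{|z|\le 1-\mu\eta\}$, I would convert this spectral information into the pointwise bound $|u_t|\le\sqrt{10}(1-\mu\eta)^t|u_0|$ by expressing $u_t$ through the companion matrix of the recurrence and controlling the transition via the root separation or a Vandermonde estimate. The constant segment $u_0=\cdots=u_\tau$ makes the initial data trivial, and the factor $(1-\mu\eta)^{-2\tau}\le e^{2\mu\eta\tau/(1-\mu\eta)}$ that appears in these manipulations stays bounded by a universal constant because $\mu\le L$ and $\eta\tau\le \frac{1}{20L}$. Tracking these constants carefully is what produces the explicit factor $5$ rather than merely ``some constant''.

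The main obstacle is the slowest mode $\lambda\approx\mu$, where the target rate $1-\mu\eta$ coincides with the undelayed contraction and leaves no slack. Any magnitude-based estimate — bounding $|u_t-u_{t-\tau}|\le\eta\lambda\sum|u_{k-\tau}|$ and running a Halanay-type or direct induction $|u_s|\le C(1-\mu\eta)^s$ — produces a delay correction of the wrong sign ($+\tau\eta^2\lambda^2$ instead of $-\tau\eta^2\lambda^2$) and hence accumulates multiplicatively, destroying the rate precisely at $\lambda=\mu$. The cancellation that makes the true correction favorable is exactly what the characteristic-root computation captures, so the delicate part is making the root expansion rigorous and uniform in $\lambda\in[\mu,L]$ while certifying that none of the subdominant roots crosses the radius $1-\mu\eta$ — the step that pins down the admissible stepsize range.
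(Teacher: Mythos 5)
This theorem is not proved in the paper at all: it is quoted verbatim from Arjevani--Shamir--Srebro \cite{ASS} as background in the introduction, and the paper's own results (Theorems \ref{thm-1-1}, \ref{thm-1-5}, \ref{thm-pl}) are established by a completely different, non-spectral argument (the auxiliary sequence $\tilde{x}_t$ of \eqref{eq-1-2} together with Lemma \ref{lem-2-2}), precisely because diagonalization is unavailable beyond quadratics. So there is no in-paper proof to compare against, and your proposal must be judged on its own. Its skeleton --- write $e_{t+1}=e_t-\eta Ae_{t-\tau}$, diagonalize, and study the scalar characteristic polynomial $z^{\tau+1}-z^{\tau}+\eta\lambda$ for $\lambda\in[\mu,L]$ --- is the right and indeed the standard route for the quadratic case, and your closing remark about why magnitude-based inductions lose the rate exactly at $\lambda=\mu$ is correct; it is the very reason the present paper settles for the slower rate $1-\alpha\eta/(1+q)$ in the general strongly convex case.

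That said, two steps in your outline are genuine gaps rather than bookkeeping. First, the confinement of the roots: your treatment of the dominant real root ($z=1-\eta\lambda-\tau(\eta\lambda)^2-\cdots<1-\mu\eta$) is essentially sound, but for the remaining $\tau$ roots you offer only the heuristic modulus $(\eta\lambda)^{1/\tau}$. This quantity tends to $1$ as $\tau\to\infty$ when $\eta$ sits at the boundary $\frac{1}{20L(\tau+1)}$, so the comparison with $1-\mu\eta$ is not soft; it requires a Rouch\'e-type count on the circle $|z|=1-\mu\eta$, uniform in $\lambda\in[\mu,L]$, and that is exactly where the constant $20(\tau+1)$ must be earned. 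Second, and more seriously, ``all roots have modulus $\le 1-\mu\eta$'' does not by itself yield $|u_t|\le\sqrt{10}\,(1-\mu\eta)^{t}|u_0|$: the transient constant in a companion-matrix or Vandermonde representation is governed by inverse root separations, for which you have no uniform lower bound --- the dominant root and the largest subdominant real root are both of the form $1-o(1)$ with a gap of order $\ln(\tau)/\tau$, and complex roots may nearly collide at intermediate $\lambda$. Since the explicit factor $\sqrt{10}$ is the only possible source of the theorem's constant $5L$, ``tracking these constants carefully'' is the hard core of the proof, not an afterthought; one would need an explicit partial-fraction or contour-integral bound exploiting the constant initial segment $u_0=\cdots=u_\tau$, or a bespoke Lyapunov function for the recurrence. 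Until both points are carried out, the proposal is a plausible program rather than a proof.
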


\vspace{0.1cm}

As for the general strongly convex and smooth cost functions,  
Stich and Karimireddy \cite{SK} showed the following ergodic type result:

\begin{thm}[\cite{SK}]Assume that $f:\mathbb{R}^d\rightarrow\mathbb{R}$ is $\mu$-strongly convex and $L$-smooth. 
There exists a positive stepsize $\eta\leq \frac{1}{10L\tau}$ such that
\begin{equation*}
{ \mathbb{E}  \Big(f(x^{out}) - f(x_*)\Big)} = O \Big( L \tau \|x_0 -x_*\|^2 \exp \Big[ - \frac{\mu T}{10L\tau}\Big]\Big)\qquad \forall\, T\geq 1,
\end{equation*}
where the expectation is given for the variable $x^{out} \in \{x_t\}_{t=0}^{T-1}$ which is chosen to be $x_t$ with probability proportional to $(1-\mu\eta /2)^{-t}$.
\end{thm}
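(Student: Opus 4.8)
The plan is to sidestep the delayed recursion \eqref{eq-1-1} and instead analyze a \emph{shifted (virtual) iterate} on which the delay disappears and a standard strongly convex descent estimate applies. Concretely, I would introduce
\[
\tilde{x}_t := x_t - \eta \sum_{j=1}^{\tau} \nabla f(x_{t-j}),
\]
and verify the algebraic identity $\tilde{x}_{t+1} = \tilde{x}_t - \eta \nabla f(x_t)$. Thus $\{\tilde{x}_t\}$ performs an \emph{undelayed} gradient step, except that the gradient is sampled at the true iterate $x_t$ rather than at $\tilde{x}_t$; the mismatch $\tilde{x}_t - x_t = -\eta\sum_{j=1}^{\tau}\nabla f(x_{t-j})$ is a perturbation that is small precisely when $\eta\tau$ is small. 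This converts the delayed problem into a perturbed-gradient-descent analysis whose perturbation is explicitly the accumulated ``in-flight'' gradients over the delay window.

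Next I would derive a one-step potential inequality. Expanding $\|\tilde{x}_{t+1}-x_*\|^2=\|\tilde{x}_t-x_*-\eta\nabla f(x_t)\|^2$ and splitting $\tilde{x}_t-x_* = (x_t-x_*)+(\tilde{x}_t-x_t)$, I would invoke $\mu$-strong convexity at $x_t$, namely $\langle\nabla f(x_t),x_t-x_*\rangle\ge f(x_t)-f(x_*)+\tfrac{\mu}{2}\|x_t-x_*\|^2$, to extract the contraction term $-\mu\eta\|x_t-x_*\|^2$ and the descent term $-2\eta(f(x_t)-f(x_*))$, and use $L$-smoothness in the Polyak form $\|\nabla f(x_t)\|^2\le 2L(f(x_t)-f(x_*))$ to dominate the quadratic term $\eta^2\|\nabla f(x_t)\|^2$. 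The remaining cross-terms all carry the factor $\tilde{x}_t-x_t$, hence the window drift.

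The crux is to control that drift. Cauchy--Schwarz together with smoothness gives
\[
\|\tilde{x}_t-x_t\|^2\le \eta^2\tau\sum_{j=1}^{\tau}\|\nabla f(x_{t-j})\|^2\le 2\eta^2 L\tau\sum_{j=1}^{\tau}\big(f(x_{t-j})-f(x_*)\big),
\]
so the perturbation is dominated by the $\tau$ most recent function gaps with prefactor $\eta^2 L\tau=\eta\cdot(\eta L\tau)$. I would therefore build a \emph{composite potential} that augments the squared distance with these gaps,
\[
V_t=\|\tilde{x}_t-x_*\|^2+c\,\eta^2 L\sum_{j=1}^{\tau}(\tau-j+1)\big(f(x_{t-j})-f(x_*)\big),
\]
with the decreasing weights $(\tau-j+1)$ chosen so that, upon shifting $t\mapsto t+1$, the gaps generated by the drift bound telescope against the descent term $-2\eta(f(x_t)-f(x_*))$. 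The constraint $\eta\le \tfrac{1}{10L\tau}$ is exactly what renders the $\eta L\tau$-type error coefficients strictly less than one and leaves a clean recursion $V_{t+1}\le(1-\tfrac{\mu\eta}{2})V_t-c'\eta\,(f(x_t)-f(x_*))$ with $c'>0$ and $V_0=\|x_0-x_*\|^2$. I expect \textbf{this absorption step to be the main obstacle}: one must simultaneously make the contraction factor $(1-\tfrac{\mu\eta}{2})$ appear on $V_t$ (which also requires converting $\|x_t-x_*\|^2$ back to $\|\tilde{x}_t-x_*\|^2$ modulo drift), keep the function-gap coefficient $c'$ positive, and close the bookkeeping of the nested window sums, all under a single stepsize budget.

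Finally, to reach the stated \emph{ergodic} bound I would run a weighted telescoping. Multiplying the recursion by $w_t:=(1-\tfrac{\mu\eta}{2})^{-t}$ yields $w_{t+1}V_{t+1}\le w_t V_t-c'\eta\,w_{t+1}(f(x_t)-f(x_*))$; summing over $0\le t<T$ and using $V_T\ge 0$ gives
\[
c'\eta\sum_{t=0}^{T-1}w_{t+1}\big(f(x_t)-f(x_*)\big)\le V_0=\|x_0-x_*\|^2.
\]
Dividing by $\sum_{t=0}^{T-1}w_{t+1}$ identifies the left side with $c'\eta\,\mathbb{E}(f(x^{out})-f(x_*))$ under the prescribed sampling law $\Pr(x^{out}=x_t)\propto(1-\tfrac{\mu\eta}{2})^{-t}$; bounding the normalizer below by its last term $w_T=(1-\tfrac{\mu\eta}{2})^{-T}\ge \exp(\tfrac{\mu\eta T}{2})$ and using $1/\eta\sim L\tau$ then delivers a bound of the advertised form $O\big(L\tau\|x_0-x_*\|^2\exp[-\mu T/(10L\tau)]\big)$, the precise constant in the exponent being fixed by the weighting and the extremal choice of $\eta$.
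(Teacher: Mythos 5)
This statement is quoted from Stich--Karimireddy \cite{SK} as background; the paper under review does not prove it, so there is no in-paper proof to check you against. Judged on its own terms, your outline is a faithful reconstruction of the known route to such ergodic bounds and I see no fatal gap: the virtual iterate $\tilde{x}_t = x_t - \eta\sum_{j=1}^{\tau}\nabla f(x_{t-j})$ is exactly the auxiliary sequence \eqref{eq-1-2} used here (and credited to \cite{SK}), the identity $\tilde{x}_{t+1}=\tilde{x}_t-\eta\nabla f(x_t)$ checks out for $t\geq\tau$ (for $t<\tau$ the sums must be truncated, as in the paper's piecewise formula for $x_t-\tilde{x}_t$), the drift bound via Cauchy--Schwarz and $\|\nabla f\|^2\leq 2L(f-f_*)$ is the same as \eqref{xt-xtilde-eq-2-9}, and the weighted telescoping with weights $(1-\mu\eta/2)^{-t}$ is precisely how the ergodic expectation over $x^{out}$ arises. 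Two caveats: the composite-potential bookkeeping you flag as the main obstacle is indeed where all the work lives and is only sketched, and your own telescoping yields $\exp[-\mu\eta T/2]\leq\exp[-\mu T/(20L\tau)]$ at $\eta=1/(10L\tau)$, a factor of $2$ off in the exponent from the quoted $\exp[-\mu T/(10L\tau)]$; this is absorbable in the ``there exists $\eta$'' and $O(\cdot)$ phrasing but worth noting. The instructive comparison is with what this paper does instead for its Theorems \ref{thm-1-1} and \ref{thm-1-5}: it starts from the same virtual sequence and the same one-step inequality (Proposition \ref{prop-2-1}), but rather than absorbing the delay window into a Lyapunov function and averaging, it keeps the raw sequential inequality \eqref{eq-2-26} in $a_t=\|\tilde{x}_t-x_*\|^2$, $b_t=\|\nabla f(x_t)\|^2$ and proves by induction (Lemma \ref{lem-2-2}, with the verifiable coefficient condition \eqref{eq-2-21}) that $a_t\leq c^t a_0$ for every $t$. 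That is what buys the non-ergodic, last-iterate guarantee and the larger stepsize range; your approach buys a shorter argument but only a bound on a randomly sampled iterate.
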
 

\vspace{0.1cm}

The aim of this paper is to improve the aforementioned results in following senses:
First, we obtain a non-ergodic linear convergence property of \eqref{eq-1-1} when $f$ is $\mu$-strongly convex and $L$-smooth.
Second, our linear convergence result permits a larger range of the stepsize $\eta >0$ than the results of \cite{ASS, SK}.

\vspace{0.1cm}

We first state the brief version of our main result with the linear convergence.

\begin{thm}\label{thm-1-1}
Assume that $f:\mathbb{R}^d\rightarrow\mathbb{R}$ is $\mu$-strongly convex and $L$-smooth. 
Then, for any choice of stepsize $\eta$ satisfying
\begin{equation}\label{eq-1-30}
0 < \eta \leq \frac{C_{\tau}}{L\tau},
\end{equation} 
the sequence $\{x_t\}_{t \geq 0}$ of \eqref{eq-1-1} satisfies the following estimate:
\begin{equation}\label{eq-thm-1-3}
\|x_t -x_*\| \leq \frac{1}{1-J_{\tau/2}L\tau \eta}\left(1-\frac{\eta\alpha}{ 2}\right)^{t/2} \|x_0 -x_*\|  \qquad \forall\,t\geq 0,
\end{equation}
where $C_{\tau} := \frac{\tau}{\sqrt{6J_{\tau}\tau^2 +1}+1}$ and $\alpha := \frac{2\mu L}{\mu+L}$ with a constant $J_n$ for $n \in \{k/2\mid k \in \mathbb{N}\}$ defined by
\begin{equation}\label{Jn-def}
J_n  \coloneqq \sup_{0 < x \leq \frac{1}{5}} \frac{1}{x}\left( \frac{1}{\left(1 - \frac{x}{n}\right)^n} -1\right).
\end{equation}
{ It is verified in Lemma \ref{lem-2-1} that the constant $J_n$ is finite and decreasing in $n$. 
Also, the supremum is obtained by $x = 1/5$ and satisfies the following bound:}
$J_n \leq 1.455$ for $n=1/2$, $J_n \leq 1.25$ for $n=1$, and $J_n \leq 1.2$ for $n\geq 3/2$. 
\end{thm}

\vspace{0.1cm}

\begin{rem}
We briefly describe their numeric values and boundness of some constants used in Theorem \ref{thm-1-1}.

\begin{enumerate}[$(i)$] 
\item The value of $C_\tau$ increases in $\tau$ and $\lim_{\tau \rightarrow \infty} C_{\tau} \leq 1/\sqrt{7.2} \simeq 0.3727$. 
In Table \ref{values}, we provide more numeric values of $C_\tau$ up to $\tau=8$ for the sake of convenience.  

\begin{table}[ht]\label{tab-5}
\centering
\begin{tabular}{|c|c|c|c|c|c|c|c|c| }\cline{1-9}
$\tau$& 1& 2 &3 &4&5&6&7&8
\\
\hline
&&&&&&&&\\[-1.2em]
$C_{\tau}$ &0.2553&0.3096   &0.3292 &0.3396 & 0.3459 & 0.3502 & 0.3534 & 0.3557
\\
[0.2em] 
\hline
\end{tabular}
\vspace{0.2cm}
\caption{Numeric values of $C_\tau$ up to $\tau = 8$}\label{values}
\end{table} 
\item We note that the coefficient $\frac{1}{1-J_{\tau/2}L\tau \eta}$ in \eqref{eq-thm-1-3} approaches to $1$ as $\eta \rightarrow 0$. Furthermore, since $J_{\tau/2} L \tau \eta \leq J_{\tau/2}C_{\tau} <1/2$ for all $\tau \geq 1$, one sees that
$$
0 < \frac{1}{1-J_{\tau/2}L\tau \eta} < 2\qquad\mbox{ for }~\eta \leq \frac{C_{\tau}}{L\tau}.
$$
\end{enumerate}
\end{rem}

\vspace{0.1cm}

Next, we give the generalized result of Theorem \ref{thm-1-1}.

\begin{thm}\label{thm-1-5}
Assume that $f:\mathbb{R}^d\rightarrow\mathbb{R}$ is $\mu$-strongly convex and $L$-smooth.
If we choose the stepsize $\eta>0$ satisfying
\begin{equation}\label{eq-3-20}
\eta \leq \min\Bigg\{\frac{L(1+q)}{5\alpha},~
 \frac{\tau}{\sqrt{2J_{\tau}\tau^2(2+1/q)+1}+1} \Bigg\} \frac{1}{\tau L},
\end{equation}
then the sequence $\{x_t\}_{t \geq 0}$ of \eqref{eq-1-1} satisfies
\begin{equation}\label{eq-2-1}
\|x_t -x_*\| \leq  \frac{1}{1-J_{\tau/2}L\tau \eta}   \left(1-\frac{\eta\alpha}{1+q}\right)^{t/2}  \|x_0 -x_*\|\qquad \forall\, t\geq 0,
\end{equation}
where $q>0$ is a chosen constant and $\alpha := \frac{2\mu L}{\mu+L}$.
In particular, since $\frac{L}{\alpha} \geq 1$ due to $L\geq \mu$, Theorem \ref{thm-1-1} is also satisfied with $q=1$.
\end{thm}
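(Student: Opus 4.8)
The plan is to reduce the delayed dynamics to a scalar delay inequality for $a_t := \|x_t - x_*\|$ and then close a strong induction with the target bound $a_t \le B\rho^t a_0$, where $\rho := \big(1-\tfrac{\eta\alpha}{1+q}\big)^{1/2}$ and $B := \frac{1}{1-J_{\tau/2}L\tau\eta}$. First I would isolate the delay as a perturbation of one plain gradient step. Writing
$$x_{t+1}-x_* = \big(x_t - \eta\nabla f(x_t) - x_*\big) + \eta\big(\nabla f(x_t) - \nabla f(x_{t-\tau})\big),$$
I would invoke the standard contraction for $\mu$-strongly convex, $L$-smooth $f$: from the co-coercivity estimate $\langle\nabla f(x),x-x_*\rangle \ge \frac{\alpha}{2}\|x-x_*\|^2 + \frac{1}{\mu+L}\|\nabla f(x)\|^2$ (recall $\frac{\mu L}{\mu+L}=\frac{\alpha}{2}$) one gets $\|x_t - \eta\nabla f(x_t)-x_*\| \le (1-\eta\alpha)^{1/2}a_t$ whenever $\eta \le \frac{2}{\mu+L}$, which holds under \eqref{eq-3-20} since $\eta < 1/L \le 2/(\mu+L)$. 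Combining this with $L$-smoothness of the perturbation and the telescoping identity $x_t - x_{t-\tau} = -\eta\sum_{j=t-\tau}^{t-1}\nabla f(x_{j-\tau})$ (with the convention that increments vanish for indices below $\tau$), together with $\|\nabla f(x_k)\| \le L a_k$, yields the scalar delay inequality
$$a_{t+1} \le (1-\eta\alpha)^{1/2} a_t + \eta^2 L^2\!\!\sum_{k=t-2\tau}^{t-\tau-1} a_k.$$

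Next I would run a strong induction on $t$. The base cases $t=0,\dots,\tau$ are immediate because the scheme freezes $x_0 = \cdots = x_\tau$, so $a_t = a_0$, and the claim $a_t \le B\rho^t a_0$ holds as soon as $B \ge \rho^{-\tau}$; this is where the coefficient $B=\frac{1}{1-J_{\tau/2}L\tau\eta}$ enters, and I would check that $\frac{1}{1-J_{\tau/2}L\tau\eta} \ge \rho^{-\tau} = \big(1-\tfrac{\eta\alpha}{1+q}\big)^{-\tau/2}$ holds for the tabulated value of $J_{\tau/2}$ in \eqref{Jn-def}. For the inductive step, substituting $a_k \le B\rho^k a_0$ into the delay inequality and factoring out $B\rho^{t+1}a_0$ reduces everything to the single scalar requirement
$$\frac{(1-\eta\alpha)^{1/2}}{\big(1-\tfrac{\eta\alpha}{1+q}\big)^{1/2}} + \eta^2 L^2\!\!\sum_{m=\tau+2}^{2\tau+1}\Big(1-\tfrac{\eta\alpha}{1+q}\Big)^{-m/2} \le 1,$$
in which $B$ cancels entirely. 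The first term is strictly below $1$, with surplus $\approx \frac{q\,\eta\alpha}{2(1+q)}$ created precisely by degrading the rate from $(1-\eta\alpha)^{1/2}$ to $\rho$; the delay sum must be absorbed into this surplus, and balancing the two is what produces the factor $2+1/q$.

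The crux, and the step I expect to be the main obstacle, is controlling the geometric inflation in the delay sum uniformly in $\tau$. Each factor $\big(1-\tfrac{\eta\alpha}{1+q}\big)^{-m/2}$ with $m$ up to $2\tau+1$ is of size $\rho^{-\Theta(\tau)}$, which would blow up unless $\frac{\eta\alpha}{1+q}\,\tau$ stays bounded. This is the purpose of the first entry $\frac{L(1+q)}{5\alpha}$ of the minimum in \eqref{eq-3-20}: it enforces $\frac{\eta\alpha}{1+q}\,\tau \lesssim \frac15$, which lets $J_n$ serve as a uniform upper bound for the geometric factors and caps the sum by $J_\tau\tau^2(2+1/q)$-type quantities. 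Feeding these bounds into the scalar requirement turns it into $\eta^2 L^2\big(2J_\tau\tau^2(2+1/q)+1\big) \le (1-\eta L)^2$, equivalently $\eta L\big(\sqrt{2J_\tau\tau^2(2+1/q)+1}+1\big)\le 1$, which is exactly the second entry of the minimum. This closes the induction and gives \eqref{eq-2-1}. Finally, setting $q=1$ collapses the second entry to $C_\tau$ and makes the first entry at least $C_\tau$ (since $L/\alpha\ge1$), recovering Theorem \ref{thm-1-1}.
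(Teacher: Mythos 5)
Your overall architecture (reduce to a scalar delay recursion, then close an induction with target $B\rho^t a_0$) is reasonable, and your base case and the role of the two entries of the minimum in \eqref{eq-3-20} are correctly identified. However, there is a genuine quantitative gap in the inductive step, and it is fatal to recovering the stepsize condition \eqref{eq-3-20}. After you apply the triangle inequality and bound $\|\nabla f(x_k)\|\le L a_k$, your delay recursion is
$a_{t+1}\le (1-\eta\alpha)^{1/2}a_t+\eta^2L^2\sum_{k=t-2\tau}^{t-\tau-1}a_k$,
and the only resource available to absorb the delay sum is the surplus between $(1-\eta\alpha)^{1/2}$ and $\rho=(1-\frac{\eta\alpha}{1+q})^{1/2}$, which you correctly compute to be of order $\frac{q\,\eta\alpha}{2(1+q)}$. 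That surplus is proportional to $\alpha=\frac{2\mu L}{\mu+L}\approx 2\mu$ for ill-conditioned problems, whereas the delay contribution per step is of order $\eta^2L^2\tau$ with no factor of $\alpha$. Balancing them forces $\eta\lesssim \frac{q\,\alpha}{(1+q)L^2\tau}\sim\frac{\mu}{L^2\tau}$, i.e.\ a restriction that degrades with the condition number $L/\mu$. Your claimed reduction to $\eta^2L^2\bigl(2J_\tau\tau^2(2+1/q)+1\bigr)\le(1-\eta L)^2$ does not follow from this setup: there is no mechanism in your argument by which the $\alpha$ on the absorption side cancels against an $\alpha$ on the delay side, so you cannot reach a condition of the form $\eta\lesssim\frac{1}{L\tau}$.

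The paper avoids exactly this loss by never discarding the negative gradient-norm term. It introduces the auxiliary sequence $\tilde{x}_{t+1}=\tilde{x}_t-\eta\nabla f(x_t)$ and derives (Proposition \ref{prop-2-1}) a squared-norm recursion that retains $-\bigl(\frac{\eta}{2L}-\eta^2\bigr)\|\nabla f(x_t)\|^2$ on the right-hand side; the delayed terms are kept as gradient squares $\|\nabla f(x_j)\|^2$ (not converted to $L^2a_j^2$) and are absorbed by these negative terms across iterations via Lemma \ref{lem-2-2}. In the resulting condition \eqref{eq-2-21} both the inflation $c^{-\tau}-1\approx J_\tau\frac{\alpha\eta\tau}{1+q}$ and the absorption capacity $\frac{Q\alpha}{c_q\eta^2\tau L}$ are proportional to $\alpha$, so $\alpha$ cancels and the final restriction is condition-number free. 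Your triangle-inequality decomposition, by passing to norms and invoking the contraction $\|x-\eta\nabla f(x)-x_*\|\le(1-\eta\alpha)^{1/2}\|x-x_*\|$, spends the entire co-coercivity term $\frac{2\eta}{\mu+L}\|\nabla f\|^2$ on cancelling $\eta^2\|\nabla f\|^2$ and leaves nothing to absorb the delayed gradients. To repair the argument you would need to work with squared norms, keep a residual $-c\,\eta\|\nabla f(x_t)\|^2$ term, and handle the cross term between the contraction step and the perturbation $\eta(\nabla f(x_t)-\nabla f(x_{t-\tau}))$ -- at which point you are essentially reconstructing the paper's Proposition \ref{prop-2-1} and Lemma \ref{lem-2-2}. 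A further, smaller discrepancy: the paper's inductive step does not iterate a one-step recursion in $\|x_t-x_*\|$ at all; it first proves geometric decay of $\|\tilde{x}_t-x_*\|$ (Theorem \ref{thm-2-1}) and then transfers it to $x_t$ through the identity $x_k=\tilde{x}_k-\eta\sum_{j=1}^{\tau}(\nabla f(x_{k-j})-\nabla f(x_*))$.
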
  

\vspace{0.1cm}

In fact, the number $q>0$ in Theorem \ref{thm-1-5} appears in the progress of applying Young's inequality in the proof of Proposition \ref{prop-2-1}. Actually, we observe that if one choose a large $q>0$, then the range of $\eta$ in \eqref{eq-3-20} becomes larger with an upper bound while the diminishing factor in the decay estimate \eqref{eq-2-1} becomes close to one. Contrary, if we choose a small $q>0$, then the diminishing factor in \eqref{eq-2-1} becomes small but the range of $\eta$ in \eqref{eq-3-20} is more restricted.

\vspace{0.1cm}

For the proof of Theorem \ref{thm-1-5}, we will consider an auxiliary sequence $\{\tilde{x}_t\}$, which was also utilized in the previous work \cite{SK}. Using the smoothness and the strongly convexity of the function $f$, we will obtain a sequential inequality of the error $\|\tilde{x}_t -x_*\|$. We will then analyze the sequential inequality carefully to obtain a linear convergence estimate of the error $\|\tilde{x}_t -x_*\|$. So, it will be done by using Lemma \ref{lem-2-2} which is one of the main technical contributions of this paper. Afterwards, using the derived estimate of $\|\tilde{x}_t -x_*\|$ and measuring the difference between $x_t$ and $\tilde{x}_t$, we will finally show a linear convergence result of the error $\|x_t -x_*\|$.

\vspace{0.1cm}

On the other hand, we next derive a linear convergence result when the cost function satisfies the following Polyak-{\L}ojasiewicz\,(PL) condition: There exists a number $\zeta>0$ such that
\begin{equation}\label{eq-PL}
\frac{1}{2}\|\nabla f(x)\|^2 \geq \zeta \Big( f(x)-f_*\Big),
\end{equation}
where $f_*$ is the minimal value of $f$, i.e., $f_*=f(x_*)$.

\begin{thm}\label{thm-pl}
Suppose that $f:\mathbb{R}^d\rightarrow\mathbb{R}$ is $L$-smooth and it satisfies the PL condition \eqref{eq-PL}.
If we assume that the stepsize $\eta>0$ and the delay $\tau>0$ satisfy the following inequality:
\begin{equation}\label{eq-1-10}
\eta \leq \min\left\{ \frac{L}{5 \zeta},~ \frac{2\tau}{\sqrt{1+4 J_{\tau}  \tau^2}+1 }\right\}\frac{1}{L\tau},
\end{equation}
then for any $t \geq \tau$ we have
\begin{equation}\label{analyzed_PL}
 f(x_{t}) -f_* \leq \left(1-  {\eta \zeta} \right)^{t-\tau}\Big( f(x_\tau) -f_*\Big).
\end{equation}
\end{thm}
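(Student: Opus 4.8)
The plan is to convert the delayed recursion into a one-step contraction with factor $1-\eta\zeta$ plus a controllable error, and then to cancel that error by a weighted summation rather than by discarding it. First I would use the $L$-smoothness of $f$ together with the update $x_{t+1}-x_t=-\eta\nabla f(x_{t-\tau})$ (valid for $t\ge\tau$) to write
\[
f(x_{t+1})-f_* \le f(x_t)-f_* -\eta\langle \nabla f(x_t),\nabla f(x_{t-\tau})\rangle +\frac{L\eta^2}{2}\|\nabla f(x_{t-\tau})\|^2 .
\]
Applying the identity $-\langle a,b\rangle=-\tfrac12\|a\|^2-\tfrac12\|b\|^2+\tfrac12\|a-b\|^2$ with $a=\nabla f(x_t)$, $b=\nabla f(x_{t-\tau})$, and then the PL inequality \eqref{eq-PL} in the form $\|\nabla f(x_t)\|^2\ge 2\zeta(f(x_t)-f_*)$, this turns into
\[
f(x_{t+1})-f_* \le (1-\eta\zeta)\big(f(x_t)-f_*\big) +\frac{\eta}{2}\|\nabla f(x_t)-\nabla f(x_{t-\tau})\|^2 -\frac{\eta(1-L\eta)}{2}\|\nabla f(x_{t-\tau})\|^2 .
\]
Since \eqref{eq-1-10} forces $\eta<1/L$, the last term is nonpositive; the essential point is to keep it rather than throw it away, because it will pay for the mismatch error.

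Next I would estimate the mismatch. By $L$-smoothness and the telescoping $x_t-x_{t-\tau}=-\eta\sum_{s=t-\tau}^{t-1}\nabla f(x_{s-\tau})$ (whose increments vanish while $s<\tau$), the Cauchy--Schwarz inequality over at most $\tau$ terms gives
\[
\|\nabla f(x_t)-\nabla f(x_{t-\tau})\|^2 \le L^2\eta^2\tau\sum_{j=t-2\tau}^{t-\tau-1}\|\nabla f(x_j)\|^2 ,
\]
so that the single-step bound becomes the delayed recursion
\[
f(x_{t+1})-f_* \le (1-\eta\zeta)\big(f(x_t)-f_*\big) +\frac{L^2\eta^3\tau}{2}\sum_{j=t-2\tau}^{t-\tau-1}\|\nabla f(x_j)\|^2 -\frac{\eta(1-L\eta)}{2}\|\nabla f(x_{t-\tau})\|^2 .
\]

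The crux, and what I expect to be the main obstacle, is to show that the accumulated positive gradient errors are dominated by the retained negative terms, so that no slack in the contraction factor is needed. I would divide the recursion by $(1-\eta\zeta)^{t+1}$ and sum over $\tau\le t\le T-1$; the function-value part telescopes and reduces the claim \eqref{analyzed_PL} to the requirement that, after reindexing, each $\|\nabla f(x_j)\|^2$ enter with nonpositive total weight. For a fixed $j$ the positive contributions come from the $\tau$ steps $t=j+\tau+1,\dots,j+2\tau$ and carry, relative to the negative term at index $j$, the factor $\frac{L^2\eta^3\tau}{2}\sum_{k=1}^{\tau}(1-\eta\zeta)^{-k}$, so cancellation holds as soon as
\[
\frac{L^2\eta^3\tau}{2}\sum_{k=1}^{\tau}(1-\eta\zeta)^{-k}\le \frac{\eta(1-L\eta)}{2}.
\]
The first bound $\eta\le 1/(5\zeta\tau)$ in \eqref{eq-1-10} keeps $\eta\zeta\tau\le 1/5$, which is exactly what makes $\tfrac1\tau\sum_{k=1}^{\tau}(1-\eta\zeta)^{-k}\le J_\tau$; the condition then collapses to $J_\tau L^2\tau^2\eta^2\le 1-L\eta$, and solving this quadratic produces precisely the second bound $\frac{2\tau}{\sqrt{1+4J_\tau\tau^2}+1}\frac{1}{L\tau}$ of \eqref{eq-1-10}. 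The boundary ranges $\tau\le t<2\tau$, where part of the delay window lies in the frozen initial segment and the error sum has fewer terms, only help and need a brief separate check. This weighted-summation cancellation is the heart of the argument, and is where I would appeal to Lemma \ref{lem-2-2}.
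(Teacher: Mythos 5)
Your proposal is correct and follows essentially the same route as the paper: the identical one-step decomposition via $L$-smoothness, the polarization identity, the PL inequality, and the Cauchy--Schwarz bound on $\|\nabla f(x_t)-\nabla f(x_{t-\tau})\|^2$, with the step-size restriction verified through Lemma \ref{lem-2-1} exactly as in the text. Your weighted telescoping sum is just an equivalent repackaging of the induction in Lemma \ref{lem-2-2}: both reduce to the same cancellation condition $\delta_1\sum_{k=1}^{\tau}c_1^{-k}\leq Q_1$, which is the paper's condition \eqref{eq-2-21} at the binding index $j=\tau$.
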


\vspace{0.1cm}

In order to prove Theorem \ref{thm-pl}, we will meet a sequence inequality in terms of the sequence $\{f(x_t)-f_*\}_{t \geq \tau}$ similar to the one appeared in the proof of Theorem \ref{thm-1-5}. We will then use the result of Lemma \ref{lem-2-1} to analyze the concerned sequence. Interestingly, its proof does not use the auxiliary sequence $\{\tilde{x}_t\}$ anymore, and so it is simpler than the proof of Theorem \ref{thm-1-5}. We also utilize the framework of the proof of Theorem \ref{thm-pl} to achieve a new convergence result for the stochastic gradient descent involving a time-varying delay in Section \ref{sgd}.

\vspace{0.1cm}

The rest of this paper is organized as follows. In Section \ref{sec-2}, we define an auxiliary sequence and show the linear convergence property of its error. With the help of this result, we derive the linearly convergent error estimate of the sequence $\{x_t\}_{t \geq 0}$ in Section \ref{sec-3}. Also, the linear convergence result under the PL condition is proved in Section \ref{sec-PL}. We establish a convergence result for the stochastic gradient descent involving a time-varying delay in Section \ref{sgd} under the PL condition. Finally, we provide some numerical examples in Section \ref{sec-4} for supporting our analyzed results.

\vspace{0.2cm}

\section{Preliminary convergence result }\label{sec-2}
 
In this section, we discuss a convergence result of an auxiliary sequence $\{\tilde{x}_t\}$ satisfying
\begin{equation}\label{eq-1-2}
\tilde{x}_0 = x_0,\qquad\tilde{x}_{t+1} = \tilde{x}_t - \eta \nabla f(x_t)\qquad \forall\, t\geq 0,
\end{equation}
where $\{x_t\}$ is defined by \eqref{eq-1-1}. It will play an essential role to obtain the linear convergence estimate \eqref{eq-2-1} (cf. \cite{SK}).
Throughout Section \ref{sec-2}-\ref{sec-3}, it is assumed that the cost function $f$ satisfies the following two assumptions:
\begin{ass}\label{LS}
The function $f$ is $L$-smooth, defined as follows: {\it There is a number $L >0$ such that}
\begin{equation}\label{L-smooth}
\| \nabla f(y) - \nabla f (x)\| \leq L\|y-x\|\qquad \forall\, x,\,y \in \mathbb{R}^d.
\end{equation}
\end{ass}
\begin{ass}\label{sc} 
The function $f$ is $\mu$-strongly convex, defined as follows: {\it There is a number $\mu >0$ such that}
\begin{equation}\label{strong}
f(y) \geq f(x) + (y-x)^T\cdot\nabla f(x) + \frac{\mu}{2}\|y-x\|^2\qquad\forall\,x,\,y \in \mathbb{R}^d.
\end{equation}
\end{ass}

We state an intermediate inequality regarding the quantity $\|\tilde{x}_t-x_*\|$.


\begin{prop}\label{prop-2-1}
Let $\tilde{x}_t$ be an auxiliary sequence defined by \eqref{eq-1-2}.
For the minimizer $x_*\in\mathbb{R}$ of the given cost function $f$, we obtain
\begin{equation}\label{eq-2-25}
\begin{split}
&\|\tilde{x}_{t+1} -x_*\|^2
\\ 
&\leq    \Big(1 - \frac{\alpha \eta}{ 1+q}\Big)   \|\tilde{x}_t -x_*\|^2 - \Big( \frac{\eta}{2L} -  \eta^2\Big) \|\nabla f(x_t)\|^2 +\eta^2 \tau\Big( \frac{\alpha \eta}{ q} + 2L \eta\Big)  \mathcal{R}(t), 
\end{split}
\end{equation}
where $\alpha := \frac{2\mu L}{\mu+L}$, $q>0$ is a constant and
\begin{equation}\label{Rt-def}
\mathcal{R}(t) := \left\{\begin{array}{ll}0& \textrm{ if }~t =0,
\\
 \sum_{j=0}^{t-1} \|\nabla f(x_j)\|^2 &\textrm{ if }~ 1 \leq t \leq \tau,
\\
\sum_{j=t-\tau}^{t-1} \|\nabla f(x_{j})\|^2 &\textrm{ if }~ t \geq \tau+1.
\end{array}\right.
\end{equation}

\end{prop}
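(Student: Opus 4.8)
The plan is to start from the defining relation $\tilde{x}_{t+1} = \tilde{x}_t - \eta\nabla f(x_t)$ and expand the square,
\begin{equation*}
\|\tilde{x}_{t+1} - x_*\|^2 = \|\tilde{x}_t - x_*\|^2 - 2\eta\inp{\nabla f(x_t)}{\tilde{x}_t - x_*} + \eta^2\|\nabla f(x_t)\|^2.
\end{equation*}
The basic difficulty is that the gradient is evaluated at the delayed-type point $x_t$ while the error is measured at $\tilde{x}_t$, so I would split the cross term through the decomposition $\tilde{x}_t - x_* = (x_t - x_*) + (\tilde{x}_t - x_t)$, giving
\begin{equation*}
-2\eta\inp{\nabla f(x_t)}{\tilde{x}_t - x_*} = -2\eta\inp{\nabla f(x_t)}{x_t - x_*} - 2\eta\inp{\nabla f(x_t)}{\tilde{x}_t - x_t}.
\end{equation*}

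For the first piece I would invoke the standard coercivity estimate for a $\mu$-strongly convex and $L$-smooth function, applied at $x_t$ and $x_*$ with $\nabla f(x_*)=0$, namely
\begin{equation*}
\inp{\nabla f(x_t)}{x_t - x_*} \geq \frac{\alpha}{2}\|x_t - x_*\|^2 + \frac{1}{\mu + L}\|\nabla f(x_t)\|^2,
\end{equation*}
which follows from Assumptions \ref{LS}--\ref{sc} together with the identity $\frac{\mu L}{\mu+L} = \frac{\alpha}{2}$. To convert the resulting $\|x_t - x_*\|^2$ into the $\|\tilde{x}_t - x_*\|^2$ that appears in \eqref{eq-2-25}, I would use Young's inequality with the free parameter $q>0$ in the form $\|\tilde{x}_t - x_*\|^2 \leq (1+q)\|x_t - x_*\|^2 + (1+\frac1q)\|\tilde{x}_t - x_t\|^2$, which rearranges to $\|x_t - x_*\|^2 \geq \frac{1}{1+q}\|\tilde{x}_t - x_*\|^2 - \frac{1}{q}\|\tilde{x}_t - x_t\|^2$. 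This is exactly the step that produces the contraction factor $1/(1+q)$ and the term $\frac{\eta\alpha}{q}\|\tilde{x}_t - x_t\|^2$.

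For the remaining cross term I would apply Young's inequality tuned so that the gradient part carries the coefficient $\frac{\eta}{2L}$, that is $-2\eta\inp{\nabla f(x_t)}{\tilde{x}_t - x_t} \leq \frac{\eta}{2L}\|\nabla f(x_t)\|^2 + 2L\eta\|\tilde{x}_t - x_t\|^2$. Collecting all $\|\nabla f(x_t)\|^2$ contributions then leaves the coefficient $\eta^2 - \frac{2\eta}{\mu+L} + \frac{\eta}{2L}$, at which point I would use $\mu \leq L$ (hence $\frac{2}{\mu+L} \geq \frac1L$) to bound it by $\eta^2 - \frac{\eta}{2L}$, yielding the term $-\big(\frac{\eta}{2L} - \eta^2\big)\|\nabla f(x_t)\|^2$. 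All the scattered remainders have now been gathered into $\big(\frac{\eta\alpha}{q} + 2L\eta\big)\|\tilde{x}_t - x_t\|^2$.

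The main obstacle, and the step that encodes the delay, is the bound $\|\tilde{x}_t - x_t\|^2 \leq \eta^2\tau\,\mathcal{R}(t)$. I would establish it by writing both iterates as gradient sums: since $\tilde{x}_t = x_0 - \eta\sum_{j=0}^{t-1}\nabla f(x_j)$, while the scheme \eqref{eq-1-1} gives $x_t = x_0$ for $0 \leq t \leq \tau$ and $x_t = x_0 - \eta\sum_{j=0}^{t-1-\tau}\nabla f(x_j)$ for $t \geq \tau+1$, the difference telescopes to $\tilde{x}_t - x_t = -\eta\sum_{j=0}^{t-1}\nabla f(x_j)$ when $1\leq t\leq \tau$ and to $\tilde{x}_t - x_t = -\eta\sum_{j=t-\tau}^{t-1}\nabla f(x_j)$ when $t\geq \tau+1$, precisely matching the index ranges in the definition \eqref{Rt-def} of $\mathcal{R}(t)$ (the case $t=0$ being trivial). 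Applying the Cauchy--Schwarz estimate $\big\|\sum_j a_j\big\|^2 \leq (\text{number of terms})\sum_j\|a_j\|^2$, with at most $\tau$ terms in each case, then gives $\|\tilde{x}_t - x_t\|^2 \leq \eta^2\tau\,\mathcal{R}(t)$. Substituting this into the collected remainder produces $\eta^2\tau\big(\frac{\alpha\eta}{q} + 2L\eta\big)\mathcal{R}(t)$ and completes \eqref{eq-2-25}; the only care required is treating the three cases $t=0$, $1\leq t\leq\tau$, and $t\geq\tau+1$ separately, though each reduces to the same elementary bound.
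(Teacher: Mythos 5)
Your proposal is correct and follows essentially the same route as the paper's proof: the same expansion of the square, the same coercivity inequality $\langle \nabla f(x_t), x_t - x_*\rangle \geq \frac{\mu L}{\mu+L}\|x_t-x_*\|^2 + \frac{1}{\mu+L}\|\nabla f(x_t)\|^2$, the same two applications of Young's inequality (one with weight $2L$, one with the free parameter $q$), the same use of $\mu \leq L$ to simplify the gradient coefficient, and the same telescoping-plus-Cauchy--Schwarz bound $\|\tilde{x}_t - x_t\|^2 \leq \eta^2\tau\,\mathcal{R}(t)$. No gaps.
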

\begin{proof}

The definition of $\tilde{x}_t$ in \eqref{eq-1-2} implies
\begin{equation}\label{xtilde-eq-2-3}
\begin{split}
&\|\tilde{x}_{t+1} -x_*\|^2
\\
 & = \|\tilde{x}_t -x_* - \eta \nabla f(x_t)\|^2
\\
& = \|\tilde{x}_t -x_*\|^2 - 2 \eta \langle \tilde{x}_t -x_*, \nabla f (x_t)\rangle + \eta^2 \|\nabla f(x_t)\|^2
\\
& =\|\tilde{x}_t -x_*\|^2 - 2 \eta \langle {x}_t -x_*, \nabla f (x_t)\rangle + \eta^2 \|\nabla f(x_t)\|^2 + 2 \eta \langle x_t - \tilde{x}_t, ~\nabla f(x_t)\rangle.
\end{split}
\end{equation}
By the strongly convexity and the $L$-smoothness property, we have
\begin{equation}\label{xt-eq-2-4}
\langle x_t -x_*, \nabla f(x_t)\rangle \geq \frac{\mu L}{\mu +L} \|x_t -x_*\|^2 + \frac{1}{\mu+L}\|\nabla f(x_t)\|^2.
\end{equation}
Applying \eqref{xt-eq-2-4} to \eqref{xtilde-eq-2-3}, one yields
\begin{equation}\label{xtilde-eq-2-5}
\begin{split}
&\|\tilde{x}_{t+1} -x_*\|^2 \\ 
& \leq \|\tilde{x}_t -x_*\|^2  -  {\alpha\eta}  \|x_t -x_*\|^2- \Big( \frac{2\eta}{\mu+L} -  \eta^2\Big) \|\nabla f(x_t)\|^2 + 2 \eta \langle x_t - \tilde{x}_t, ~\nabla f(x_t)\rangle.
\end{split}
\end{equation}
By Young's inequality, we easily note that
\begin{equation*}
2 \eta \langle x_t - \tilde{x}_t, ~\nabla f(x_t)\rangle \leq \frac{\eta}{2L} \|\nabla f(x_t)\|^2 +2L \eta \|x_t - \tilde{x}_t\|^2,
\end{equation*}
and again, applying this result to \eqref{xtilde-eq-2-5}, we have
\begin{equation}\label{eq-2-9}
\begin{split}
&\|\tilde{x}_{t+1} -x_*\|^2
\\ 
&\leq \|\tilde{x}_t -x_*\|^2  -  {\alpha \eta}  \|x_t -x_*\|^2- \Big( \frac{2\eta}{\mu+L} -  \eta^2 - \frac{\eta}{2L}\Big) \|\nabla f(x_t)\|^2+2L \eta \|x_t - \tilde{x}_t\|^2
\\
& \leq \|\tilde{x}_t -x_*\|^2  -  {\alpha \eta}  \|x_t -x_*\|^2- \Big( \frac{\eta}{2L} -  \eta^2\Big) \|\nabla f(x_t)\|^2+2L \eta \|x_t - \tilde{x}_t\|^2,
\end{split}
\end{equation}
where the last inequality is obtained by using $\frac{2 \eta}{\mu +L} \geq \frac{\eta}{L}$ due to $\mu \leq L$.
Since
\begin{equation*}
(1+q)\|x_t -x_*\|^2 + \Big(1 +\frac{1}{q}\Big) \|\tilde{x}_t -x_t\|^2 \geq \|\tilde{x}_t -x_*\|^2\qquad\mbox{ for }~q>0,
\end{equation*}
the inequality \eqref{eq-2-9} becomes
\begin{equation}\label{eq-2-10}
\begin{split}
&\|\tilde{x}_{t+1} -x_*\|^2 \\ 
& \leq \Big(1 - \frac{\alpha \eta}{ 1+q}\Big)   \|\tilde{x}_t -x_*\|^2 - \Big( \frac{\eta}{2L} -  \eta^2\Big) \|\nabla f(x_t)\|^2 + \Big( \frac{\alpha \eta}{ q} + 2L \eta\Big) \|x_t - \tilde{x}_t\|^2.
\end{split}
\end{equation}
On the other hand, subtracting \eqref{eq-1-2} from \eqref{eq-1-1}, one gets
\begin{equation*}
x_{t+1}-\tilde{x}_{t+1}=x_t-\tilde{x}_t-\eta\left(\nabla f(x_{t-\tau})-\nabla f(x_t)\right),
\end{equation*}
and then, this implies
\begin{equation*}
x_t-\tilde{x}_t= \left\{\begin{array}{ll} 0 &\textrm{if }~ t= 0,
\\
\eta \sum_{j=0}^{t-1} \nabla f(x_j) &\textrm{if }~ 0 \leq t \leq \tau,
\\
 \eta \sum_{j=t-\tau}^{t-1} \nabla f(x_{j})&\textrm{if }~t \geq \tau +1.
 \end{array}
 \right.
\end{equation*}
By Cauchy-Schwartz inequality, we have
\begin{equation}\label{xt-xtilde-eq-2-9}
\|x_t - \tilde{x}_t\|^2   \leq \eta^2 \tau \mathcal{R}(t),
\end{equation}
where $\mathcal{R}(t)$ is given by \eqref{Rt-def}.
Applying \eqref{xt-xtilde-eq-2-9} to \eqref{eq-2-10}, the desired inequality \eqref{eq-2-25} is shown.
\end{proof}

\vspace{0.2cm}

For simplicity, we let
\begin{equation*}
a_t =  \|\tilde{x}_t -x_*\|^2 , \qquad b_t = \|\nabla f(x_t)\|^2.
\end{equation*}
Then the result \eqref{eq-2-25} in Proposition \ref{prop-2-1} is rewritten as
\begin{equation}\label{at-eq-2-9}
a_{t+1} \leq  \Big(1 - \frac{\alpha \eta}{ 1+q}\Big)a_t  - \Big( \frac{\eta}{2L} -  \eta^2\Big) b_t  + \eta^2 \tau\Big( \frac{\alpha \eta}{ q} +2L \eta\Big) \mathcal{R}(t).
\end{equation}
Since $\alpha \leq L$, the inequality \eqref{at-eq-2-9} becomes
\begin{equation}\label{eq-2-26}
\begin{split}
a_{t+1}  \leq & ~\Big(1 - \frac{\alpha \eta}{ 1+q}\Big)a_t  - \Big( \frac{\eta}{2L} -\eta^2\Big) b_t  
\\
&\qquad ~+ \left\{ \begin{array}{ll} 0 & \textrm{for }~t =0,
\\
\delta (b_{t-1} + \cdots + b_0)&\textrm{for }~ 1 \leq t \leq \tau,
\\
\delta (b_{t-1} + \cdots + b_{t-\tau})&\textrm{for }~t \geq \tau +1,
\end{array}
\right.
\end{split}
\end{equation}
where $\delta := \left( 2+ \frac{1}{ q}\right) \eta^3 \tau L$.

\vspace{0.2cm}

From \eqref{eq-2-26}, we next establish the estimate of $a_t$ in the following lemma.

\begin{lem}\label{lem-2-2}
Suppose that two positive sequences $\{a_t\}_{t \geq 0}$ and $\{b_t\}_{t \geq 0}$ satisfy
\begin{equation}\label{eq-2-20}
a_{t+1}  \leq \left\{\begin{array}{ll}c a_0 - Qb_0 &\quad \textrm{for } ~t=0,
\\
ca_t +\delta ( b_{t-1} + \cdots + b_0) - Qb_t&\quad \textrm{for }~ 1 \leq t \leq \tau,
\\
c a_t + \delta (b_{t-1} + \cdots + b_{t-\tau}) - Q b_t, &\quad \textrm{for }~t \geq \tau+1,
\end{array}\right.
\end{equation}
where $c$, $\delta$ and $Q$ are some positive constants.
If we assume that for any $1\leq j \leq \tau$,
\begin{equation}\label{eq-2-21}
\sum_{k=0}^{j-1}   c^{k} \delta \leq  c^{j} Q,
\end{equation}
then we have
\begin{equation}\label{at-eq-2-14}
a_{t+1} \leq c^{t+1} a_0\qquad\forall\, t \geq 0.
\end{equation}
\end{lem}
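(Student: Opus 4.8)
The plan is to treat the three-case recursion \eqref{eq-2-20} as a single inhomogeneous linear recursion $a_{s+1} \le c\,a_s + R_s$, where the source term $R_s := \delta \sum_{j \in W_s} b_j - Q b_s$ gathers the delayed positive contributions minus the current negative one. Here $W_s$ denotes the delay window, namely $W_0 = \emptyset$, $W_s = \{0,\dots,s-1\}$ for $1 \le s \le \tau$, and $W_s = \{s-\tau,\dots,s-1\}$ for $s \ge \tau+1$. First I would unroll this recursion starting from $a_0$, which yields
\[
a_{t+1} \le c^{t+1} a_0 + \sum_{s=0}^{t} c^{t-s} R_s .
\]
Since all $a_t,b_t \ge 0$, it then suffices to show that the accumulated source $\sum_{s=0}^{t} c^{t-s} R_s$ is nonpositive for every $t \ge 0$; this is the point at which the hypothesis \eqref{eq-2-21} must be used, and establishing it gives the claim \eqref{at-eq-2-14} immediately.

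Next I would interchange the order of summation in the double sum and collect the total coefficient multiplying each fixed $b_j$ with $0 \le j \le t$. The term $-Q b_s$ contributes only at $s=j$, giving $-Q c^{t-j}$, whereas $b_j$ re-enters through the window $W_s$ precisely for $j+1 \le s \le \min(j+\tau,\,t)$. One must check that this single range is correct across both the truncated regime $1 \le s \le \tau$ and the full regime $s \ge \tau+1$; a short verification shows $j \in W_s$ iff $j+1 \le s \le j+\tau$ uniformly, which combined with $s \le t$ gives the stated range. Hence the coefficient of $b_j$ equals
\[
-Q c^{t-j} + \delta \sum_{s=j+1}^{\min(j+\tau,\,t)} c^{t-s}.
\]

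Setting $m := \min(j+\tau,\,t) - j \in \{0,1,\dots,\tau\}$ and factoring out the common positive power of $c$, the requirement that this coefficient be nonpositive reduces to exactly $\delta \sum_{k=0}^{m-1} c^{k} \le Q c^{m}$, which is precisely the assumption \eqref{eq-2-21} (and is trivial when $m=0$, where the coefficient is just $-Q \le 0$). Summing these nonpositive contributions over $j$ gives $\sum_{s=0}^{t} c^{t-s} R_s \le 0$, and together with the unrolled estimate this yields $a_{t+1} \le c^{t+1} a_0$. The same conclusion can be reached by a strong induction on $t$ that carries the $-Q b_t$ terms forward as a reservoir to absorb the later $\delta b_j$ contributions, but the unroll-and-collect route avoids re-deriving the window bookkeeping at every inductive step.

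The main obstacle I anticipate is exactly this bookkeeping: determining, for a fixed index $j$, the precise set of steps $s$ at which $b_j$ reappears through a delay window, reconciling the two boundary regimes of \eqref{eq-2-20} into the single range $j+1 \le s \le \min(j+\tau,\,t)$, and then performing the geometric re-indexing by $k = \min(j+\tau,\,t) - s$ without inadvertently reversing the inequality, so that the per-index condition aligns cleanly with \eqref{eq-2-21} for every admissible window length $m \le \tau$.
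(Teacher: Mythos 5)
Your proposal is correct and is essentially the paper's argument: both unroll the recursion to $a_{t+1} \leq c^{t+1}a_0 + \sum_{j} (\text{coefficient of } b_j)\, b_j$ and then use \eqref{eq-2-21} to show that each coefficient $c^{\,t-j-m}\bigl(\delta\sum_{k=0}^{m-1}c^k - Qc^m\bigr)$ with $m=\min(j+\tau,t)-j \leq \tau$ is nonpositive, which is exactly the bracketed quantity appearing in the paper's displays \eqref{eq-2-18} and \eqref{eq-2-19}. The only difference is organizational: you obtain the unrolled form in one stroke via the unified window characterization ($j \in W_s$ iff $j+1 \leq s \leq j+\tau$) and an interchange of summation, whereas the paper derives the same expression through a three-case analysis with an explicit induction for $t \geq \tau+1$; your bookkeeping is the cleaner of the two and all steps (in particular the preservation of the inequality under unrolling, which needs only $c>0$, and the empty-window case $m=0$) check out.
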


\begin{proof}

In this proof, we will show the desired result \eqref{at-eq-2-14} separately considering three cases: $t=0$, $1\leq t\leq \tau$ and $t\geq \tau+1$.

\vspace{0.1cm}

\noindent{\underline{\it Case 1.}~}
When $t=0$, the inequality \eqref{eq-2-20} implies
\begin{equation*}
a_1 \leq c a_0 - Qb_0\leq c a_0,
\end{equation*}
which directly gives the desired one \eqref{at-eq-2-14} for $t=0$.

\vspace{0.1cm}

\noindent{\underline{\it Case 2.}~}
We next show \eqref{at-eq-2-14} for $1\leq t\leq\tau$.
By \eqref{eq-2-20} for $t=0$ and $1$, one has
\begin{equation*}
\begin{split}
a_2 & \leq ca_1 + \delta b_0 -Qb_1 
\\
& \leq c (ca_0 - Qb_0) + \delta b_0 - Qb_1
\\
& = c^2 a_0 + (\delta -cQ)b_0 -Qb_1.
\end{split}
\end{equation*}
Generally, using \eqref{eq-2-20} for $0\leq t\leq\tau$ sequentially, we can obtain that for $1 \leq t \leq \tau$,
\begin{equation}\label{eq-2-18}
\begin{aligned}
a_{t+1}  &\leq \, c^{t+1} a_0 + \sum_{j=0}^{t-1} \Big[ \delta  + c \delta + \cdots + c^{t-1-j} \delta - c^{t-j} Q\Big] b_j - Qb_t\\
&= \, c^{t+1} a_0 + \sum_{j=0}^{t-1}\left( \sum_{k=0}^{t-1-j}c^k\delta - c^{t-j} Q\right) b_j - Qb_t.
\end{aligned}
\end{equation}
This inequality with the condition \eqref{eq-2-21} implies that $a_{t+1} \leq c^{t+1} a_0$ for $1 \leq t \leq \tau$.

\vspace{0.1cm}

\noindent{\underline{\it Case 3.}~}
For $t \geq \tau+1$, we first claim that
\begin{equation}\label{eq-2-19}
\begin{split}
a_{t+1}& \leq \, c^{t+1} a_0 + \sum_{j=t-\tau}^{t-1} \Big[\delta + c \delta + \cdots + c^{t-1-j}\delta - c^{t-j}Q\Big] b_j
\\
&\qquad + \sum_{j=0}^{t-1-\tau} c^{t-\tau-j}\Big[ \delta + c\delta + \cdots + c^{\tau-1} \delta -c^{\tau}Q\Big] b_j - Qb_t.
\end{split}
\end{equation}
To show \eqref{eq-2-19}, we try to use the mathematical induction.\\
{\underline{\it Base case.}~}
By \eqref{eq-2-20} with $t=\tau+1$ and \eqref{eq-2-18} with $t=\tau$, one has
\begin{equation}\label{atautwo-eq-2-17}
\begin{split}
a_{\tau +2}& \leq \, c a_{\tau+1}  +  \delta (b_{\tau}+b_{\tau-1} + \cdots + b_{1}) - Q b_{\tau+1}
\\
& \leq \, c\Big(c^{\tau+1} a_0 + \sum_{j=0}^{\tau-1} \Big[ \delta  + c \delta + \cdots + c^{\tau-1-j} \delta - c^{\tau-j} Q\Big] b_j - Qb_{\tau}\Big)
\\
&\qquad  +  \delta (b_{\tau} + b_{\tau-1} + \cdots + b_{1}) - Q b_{\tau+1}.
\end{split}
\end{equation}
Rearranging the right hand side in \eqref{atautwo-eq-2-17}, we get
\begin{equation*}
\begin{split}
a_{\tau+2}& \leq \, c^{\tau +2}a_0 + \sum_{j=1}^{\tau-1} \Big[ \delta + c \delta + \cdots + c^{\tau -1 -j} \delta - c^{\tau-j} Q\Big] b_j +(\delta -  cQ)b_{\tau}  - Qb_{\tau+1}
\\
&\qquad + c\Big[ \delta + c \delta + \cdots + c^{\tau -1} \delta - c^{\tau} Q\Big] b_0
\\
& = \, c^{\tau +2}a_0 + \sum_{j=1}^{\tau} \Big[ \delta + c \delta + \cdots + c^{\tau -1 -j} \delta - c^{\tau-j} Q\Big] b_j   
\\
&\qquad + c\Big[ \delta + c \delta + \cdots + c^{\tau -1} \delta - c^{\tau} Q\Big] b_0  - Qb_{\tau+1}
\end{split}
\end{equation*}
which corresponds to the inequality \eqref{eq-2-19} for $t = \tau+1$.

\noindent{\underline{\it Induction step.}~}
Assume that \eqref{eq-2-19} holds for $t=k$ with $k \geq \tau +1$.
By applying \eqref{eq-2-20} with $t=k+1$ and \eqref{eq-2-19} with $t=k$ in order, we have
 \begin{equation}\label{attwo-eq-2-18}
 \begin{split}
 a_{k+2}& \leq \, ca_{k+1} + \delta (b_k + \cdots + b_{k+1-\tau}) - Qb_{k+1}
 \\
 & \leq \, c^{k+2} a_0 + \sum_{j=k-\tau}^{k-1} \Big[ c \delta + c^2 \delta + \cdots + c^{k-j} \delta - c^{k+1-j} Q\Big] b_j 
 \\
 &\qquad + \sum_{j=0}^{k-1-\tau} c^{k+1 -\tau -j} \Big[ \delta + c \delta + \cdots + c^{\tau-1} \delta - c^{\tau} Q\Big] b_j
 \\
 &\qquad + \delta (b_k + \cdots + b_{k+1-\tau}) - Q b_{k+1}.
 \end{split}
 \end{equation}
Rearraning the right hand side in \eqref{attwo-eq-2-18}, we obtain
 \begin{equation*}
 \begin{split}
 a_{k+2} & \leq \, c^{k+2} a_0 + \sum_{j=k+1-\tau}^{k-1} \Big[ \delta + c \delta + \cdots + c^{k-j} \delta -c^{k+1-j} Q\Big] b_j 
 \\
 &\qquad + (\delta b_k - cQb_k)  - Qb_{k+1}
 \\
 &\qquad + \sum_{j=0}^{k-\tau} c^{k+1-\tau -j}\Big[ \delta + c \delta + \cdots + c^{\tau-1}\delta -c^{\tau} Q\Big] b_j
 \\
 & = \, c^{k+2} a_0 + \sum_{j=k+1-\tau}^{k} \Big[ \delta + c \delta + \cdots + c^{k-j} \delta -c^{k+1-j} Q\Big] b_j 
 \\
 &\qquad + \sum_{j=0}^{k-\tau} c^{k+1-\tau -j}\Big[ \delta + c \delta + \cdots + c^{\tau-1}\delta -c^{\tau} Q\Big] b_j
 -   Qb_{k+1},
 \end{split}
 \end{equation*}
 which is the desired inequality \eqref{eq-2-19} for $t=k+1$.
 
By the mathematical induction, we therefore obtain that \eqref{eq-2-19} holds for any $t \geq \tau+1$.
Given the estimate \eqref{eq-2-19}, it follows directly from \eqref{eq-2-21} that $a_{t+1} \leq c^{t+1} a_0$ for any $t \geq \tau+1$.
\end{proof}

\vspace{0.2cm}

Hopefully, we will use Lemma \ref{lem-2-2} for the sequences $\{a_t\}$ and $\{b_t\}$ satisfying \eqref{eq-2-26}. So, the following lemma will be required in order to check \eqref{eq-2-21} with
\begin{equation}\label{cQdelta-eq-2-19}
c=1-\frac{\alpha\eta}{1+q}>0,\qquad Q=\frac{\eta}{2L}-\eta^2>0,\qquad \delta=\left(2+\frac{1}{q}\right)\eta^3\tau L>0.
\end{equation}

{\begin{lem}\label{lem-2-1} 
Define
\[
f_k(x)\coloneqq \frac{1}{x} \left(\left(1 - \frac{x}{k/2}\right)^{-k/2} -1\right)\qquad\mbox{ for }\,k = 1,\,2,\,3,\,\dots.
\]
Then with $n = k/2$, we have
\[ 
J_n \coloneqq \sup_{x\in (0,1/5]}f_k(x)  = f_k(1/5).
\]
In addition, the value of $J_n$ decreases in $n\geq 1/2$ and satisfies the following bound:
\begin{equation}\label{eq-2-31}
J_n \leq\left\{\begin{array}{ll} 1.455& \quad \textrm{for }~n=1/2,
\\
1.25 &\quad \textrm{for }~ n=1,
\\ 1.2& \quad \textrm{for } ~n\geq 3/2.
\end{array}
\right.
\end{equation}

\end{lem}
\begin{proof}
We first show that $f_k$ is increasing in $x \in (0,\,1/5]$. Observe that
\[
\begin{aligned}
f_k(x) &= \frac{1-\left(1 - \frac{2x}{k}\right)^{k/2}}{x\left(1 - \frac{2x}{k}\right)^{k/2}} = \frac{\left(1-\left(1 - \frac{2x}{k}\right)^{1/2}\right)\sum_{j=0}^{k-1}\left(1 - \frac{2x}{k}\right)^{j/2}}{x\left(1 - \frac{2x}{k}\right)^{k/2}} \\
& = \frac{\left(1-\left(1 - \frac{2x}{k}\right)\right)\sum_{j=0}^{k-1}\left(1 - \frac{2x}{k}\right)^{(j-k)/2}}{x\left(1+\left(1 - \frac{2x}{k}\right)^{1/2}\right)} \\
& = \frac{2}{k}\left(1+\left(1 - \frac{2x}{k}\right)^{1/2}\right)^{-1}\sum_{j=0}^{k-1}\left(1 - \frac{2x}{k}\right)^{(j-k)/2}.
\end{aligned}
\]
Then differentiating it, we easily see that $f'_k(x) > 0$ for $x\in (0,\,1/5]$ so that $J_n = f_k(1/5)$.

In order to show that $J_n$  decreases in $n$, it suffices to show that the value of $\left(1- \frac{1}{5n}\right)^n$ increases in $n$. 
To show this, we introduce a function $H:[1/2,\,\infty) \rightarrow \mathbb{R}$ defined by 
\begin{equation*}
H(y) =\ln\left( 1- \frac{1}{5y}\right)^y = y \ln \left(1-\frac{1}{5y}\right).
\end{equation*}
We note that  $H' (y) = \ln \left(1- \frac{1}{5y}\right) + \frac{1}{5y-1}$ and 
\begin{equation*}
H'' (y) = \frac{1}{y(5y-1)} - \frac{1}{(y-1/5)(5y-1)} < 0\qquad \forall\, y \geq 1/2.
\end{equation*}
Combining this fact with that $\lim_{y \rightarrow \infty} H'(y) = 0$, we find that $H' (y) >0$ for $y \in [1/2, \infty)$. Therefore, $H(y)$ increases in $y \geq 1/2$, and so $J_n$ decreases in $n \geq 1/2$.

Finally, we have the bound \eqref{eq-2-31} by numerical computation for $f_k(1/5)$ with $k = 1,\,2,\,3$.
\end{proof}
\begin{rem}By Lemma \ref{lem-2-1}, we have
\begin{equation}\label{eq-2-30}
\frac{1}{\left(1 - \frac{x}{n}\right)^n} \leq 1 + J_n x \qquad \forall\, n\in \{k/2 \mid k \in \mathbb{N}\}, ~x \in (0,\,1/5].
\end{equation}
\end{rem}
}

\vspace{0.2cm}

Now, we show that the error sequence $\|\tilde{x}_t -x_*\|$ decays exponentially fast as $t$ goes to infinity in the following theorem.

\begin{thm}\label{thm-2-1}
Let $\tilde{x}_t$ be an auxiliary sequence defined by \eqref{eq-1-2} and $x_*\in\mathbb{R}$ be the minimizer of the given cost function $f$.
For a given $q>0$, if we choose $\eta>0$ satisfying
\begin{equation}\label{eq-2-28}
\eta \leq \min\Bigg\{ { \frac{L (1+q)}{5\alpha}},~{  \frac{\tau}{\sqrt{2J_{\tau}\tau^2(2+1/q)+1}+1}} 
 \Bigg\} \frac{1}{\tau L},
\end{equation}
then we have
\begin{equation*}
\|\tilde{x}_t -x_*\|^2 \leq \left(1 - \frac{\alpha \eta}{ 1+q}\right)^{t} \|x_0 -x_*\|^2.
\end{equation*}
\end{thm}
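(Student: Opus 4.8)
The plan is to recognize that the inequality \eqref{eq-2-26}, written with $a_t=\|\tilde{x}_t-x_*\|^2$ and $b_t=\|\nabla f(x_t)\|^2$, is exactly of the form \eqref{eq-2-20} required in Lemma \ref{lem-2-2}, with the constants $c$, $Q$ and $\delta$ as in \eqref{cQdelta-eq-2-19}. First I would check that these constants are admissible: since $a_t,b_t\ge 0$ and, by the stepsize bound, $\eta<(1+q)/(5\alpha)<(1+q)/\alpha$ forces $c=1-\frac{\alpha\eta}{1+q}\in(0,1)$, while the second term in the minimum in \eqref{eq-2-28} makes the denominator $\sqrt{2J_\tau\tau^2(2+1/q)+1}+1>2$, so that $\eta<1/(2L)$ and hence $Q=\frac{\eta}{2L}-\eta^2>0$. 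Because $\tilde{x}_0=x_0$ we have $a_0=\|x_0-x_*\|^2$, so once Lemma \ref{lem-2-2} applies it yields $a_{t+1}\le c^{t+1}a_0=\bigl(1-\frac{\alpha\eta}{1+q}\bigr)^{t+1}\|x_0-x_*\|^2$, which is precisely the claim.

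Everything therefore reduces to verifying the hypothesis \eqref{eq-2-21}, namely $\sum_{k=0}^{j-1}c^k\delta\le c^jQ$ for $1\le j\le\tau$. Summing the geometric series and dividing by $c^j$, this is equivalent to
\[
\delta\,\frac{c^{-j}-1}{1-c}\le Q,\qquad 1\le j\le\tau.
\]
Since $c\in(0,1)$, the left-hand side is increasing in $j$, so it suffices to treat the single binding case $j=\tau$; the smaller indices then follow automatically. This monotone reduction to one inequality is the organizational heart of the argument.

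For the case $j=\tau$ I would apply Lemma \ref{lem-2-1} with $n=\tau$ and $x=\frac{\tau\alpha\eta}{1+q}$, noting that the first entry of the minimum in \eqref{eq-2-28}, i.e.\ $\eta\le\frac{1+q}{5\alpha\tau}$, is precisely what guarantees $x\in(0,1/5]$. The lemma gives $c^{-\tau}=\bigl(1-\frac{x}{\tau}\bigr)^{-\tau}\le 1+J_\tau x$, so that $c^{-\tau}-1\le J_\tau\frac{\tau\alpha\eta}{1+q}$; dividing by $1-c=\frac{\alpha\eta}{1+q}$ collapses the required inequality to the clean sufficient condition $\delta J_\tau\tau\le Q$. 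Substituting $\delta=(2+1/q)\eta^3\tau L$ and $Q=\frac{\eta}{2L}-\eta^2$ and clearing one factor of $\eta$ turns this into the quadratic inequality $2L^2\tau^2(2+1/q)J_\tau\,\eta^2+2L\eta-1\le 0$, whose positive root (after rationalizing) is exactly $\eta\le\frac{1}{L(\sqrt{2J_\tau\tau^2(2+1/q)+1}+1)}$, i.e.\ the second term in \eqref{eq-2-28}. Hence both hypotheses of Lemma \ref{lem-2-2} hold and the conclusion follows. I expect the main obstacle to be controlling the factor $c^{-\tau}$ uniformly in $\tau$; this is exactly where Lemma \ref{lem-2-1} is indispensable, since a crude bound such as $c^{-\tau}\le e^{\tau\alpha\eta/(1+q)}$ would not be sharp enough to recover the stated range of $\eta$.
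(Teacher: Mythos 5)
Your proposal is correct and follows essentially the same route as the paper: reduce to verifying hypothesis \eqref{eq-2-21} of Lemma \ref{lem-2-2} for the constants in \eqref{cQdelta-eq-2-19}, observe that only the case $j=\tau$ is binding, invoke Lemma \ref{lem-2-1} with $x=\tfrac{\tau\alpha\eta}{1+q}$ (validated by the first term of the minimum in \eqref{eq-2-28}) to reduce to the sufficient condition $\delta J_\tau \tau\le Q$, and solve the resulting quadratic in $\eta$ to recover the second term of \eqref{eq-2-28}. Your explicit checks that $c\in(0,1)$ and $Q>0$ are a small addition the paper leaves implicit; otherwise the arguments coincide.
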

\begin{proof}
To prove the exponential decaying property of $a_t=\|\tilde{x}_t -x_*\|^2$ satisfying the sequential inequality \eqref{eq-2-26}, we will use Lemma \ref{lem-2-2}.
So, it is suffices to show that \eqref{eq-2-21} in Lemma \ref{lem-2-2} holds for $c$, $Q$ and $\delta$ given in \eqref{cQdelta-eq-2-19}.
The condition \eqref{eq-2-21} of Lemma \ref{lem-2-2} is equivalent to
\begin{equation*}
\delta \left(\frac{1-c^{j}}{1-c}\right) \leq Q c^j \qquad \forall \, j=1,2,\cdots,\tau,
\end{equation*}
which is rearranged to
\begin{equation}\label{cq-eq-2-25}
\frac{c_q \eta^2 \tau L}{\alpha} \leq \left( Q + \frac{c_q \eta^2 \tau L}{\alpha}\right) c^j \qquad\forall \, j=1,2,\cdots,\tau,
\end{equation}
where $c_q :=  (1+q) \left( 2 + \frac{1}{ q}\right)$.
Since the right hand side of \eqref{cq-eq-2-25} decreases in $j$ due to $c<1$, it is sufficient to show \eqref{cq-eq-2-25} for only $j=\tau$, which is the same with
\begin{equation}\label{eq-2-6}
  c^{-\tau}\leq  1 + \frac{Q\alpha}{c_q \eta^2 \tau L}.
\end{equation}
Since $\alpha \leq L$ and by \eqref{eq-2-28}, we note that
\begin{equation*}
\frac{\alpha \eta \tau}{ 1+q}  \leq \frac{1}{5},
\end{equation*}
and then Lemma \ref{lem-2-1} for $x=\frac{\alpha\eta\tau}{1+q}$ and $n=\tau$ gives
\begin{equation*}
c^{-\tau} =\frac{1}{\left(1 - \frac{\alpha \eta}{ 1+q}\right)^{\tau}} \leq 1 +J_{\tau} \left(\frac{\alpha \eta \tau}{ 1+q}\right).
\end{equation*}
Thus, to show \eqref{eq-2-6}, it remains to check
\begin{equation*}
1+J_{\tau} \left(\frac{\alpha \eta \tau}{ 1+q}\right) \leq 1 + \frac{Q\alpha}{c_q \eta^2 \tau L} = 1+ \frac{\alpha}{c_q { \eta} \tau L} \left( \frac{1}{2L} -\eta\right),
\end{equation*}
which is reduced to
\begin{equation}\label{tau-1q-eq-2-27}
\frac{\tau}{ 1+q}  \leq \frac{1}{ J_{\tau} c_q   \eta^2 \tau L} \left( \frac{1}{2L} -\eta\right).
\end{equation}
Actually, the inequality \eqref{tau-1q-eq-2-27} is equivalent to
\begin{equation}
\eta^2 + \eta\left(\frac{ 1+q}{ J_{\tau} c_q \tau^2 L}\right)  \leq \frac{1+q}{2 J_{\tau} L^2 \tau^2 c_q},
\end{equation}
which holds true for $\eta >0$ satisfying
\begin{equation*}
\begin{split}
\eta\, &\leq \, \sqrt{ \frac{1+q}{2 J_{\tau} L^2 \tau^2 c_q} + \frac{(1+q)^2}{ 4J_{\tau}^2 c_q^2 \tau^4 L^2}} - \frac{1+q}{ 2J_{\tau} c_q \tau^2 L}
\\
&=\,\frac{1}{\left(\sqrt{2J_{\tau}\tau^2(2+1/q)+1}+1\right)L}.
\end{split}
\end{equation*}
Under the assumption \eqref{eq-2-28}, the desired inequality \eqref{eq-2-6} is shown, and then the proof is concluded.
\end{proof}

\vspace{0.2cm}

\section{Proof of Theorem \ref{thm-1-5}}\label{sec-3}

In this section, we derive a decay estimate of the gradient descent with a fixed delay $\tau\in\mathbb{N}$.
This main result of this paper will be essentially obtained by the use of decaying property of the auxiliary sequence $\{\tilde{x}_k\}$ proved in Theorem \ref{thm-2-1}.

\vspace{0.1cm}

\begin{proof}[Proof of Theorem \ref{thm-1-5}]
~This proof will be shown by using the inductive argument.\\
{\underline{\it Base case.}~}
By Lemma \ref{lem-2-1} with $x=\frac{\eta\alpha(\tau/2)}{1+q}\leq \frac{\tau \eta L}{2(1+q)} \leq \frac{1}{10}$ and $n=\tau/2$, and since $\alpha\leq L$, one yields
\begin{equation}\label{R-tau-eq-3-1}
\begin{split}
\frac{1}{\left( 1- \frac{\eta \alpha}{1+q}\right)^{\tau /2}}& \leq 1 + J_{\tau/2} \left(\frac{\eta \alpha (\tau/2)}{1+q}\right)
\\
&< 1+ J_{\tau/2} { L \tau\eta}
\\
&\leq \frac{1}{1-J_{\tau/2} L\tau \eta},
\end{split}
\end{equation}
which is equivalent to
\begin{equation}\label{B-eq-3-2}
B:=\frac{1}{1 -J_{\tau/2}L\tau \eta} \left( 1- \frac{\eta \alpha}{1+q}\right)^{\tau /2} \geq 1.
\end{equation}
Since $x_{t} = x_0$ for $ 0 \leq t \leq \tau$ and by \eqref{B-eq-3-2}, one sees that \eqref{eq-2-1} obviously holds for $ 0\leq t \leq \tau$.

\noindent{\underline{\it Induction step.}~}
Let $k \geq \tau+1$. Suppose that \eqref{eq-2-1} holds for $0 \leq t \leq k-1$, i.e.,
\begin{equation}\label{eq-2-3}
\|x_t -x_*\| \leq B\left(1-\frac{\eta \alpha}{1+q}\right)^{(t-\tau)/2} \qquad\mbox{ for }\, 0 \leq t \leq k-1.
\end{equation}
Based on \eqref{eq-2-3}, we will show that \eqref{eq-2-1} holds true for $t = k$.
Since
$$
x_k=\tilde{x}_k - \eta \sum_{j=1}^{\tau}\left( \nabla f(x_{k-j})-\nabla f(x_*)\right)
$$
and by the triangle inequality and the $L$-smooth property, we have
\begin{equation}\label{eq-3-10}
\begin{split}
\|x_k -x_* \| &  = \Big\| \tilde{x}_k -x_* - \eta \sum_{j=1}^{\tau} \left(\nabla f(x_{k-j})-\nabla f(x_*)\right)\Big\|
\\
&\leq \| \tilde{x}_k -x_*\|+ \eta \sum_{j=1}^{\tau}\| \nabla f(x_{k-j})-\nabla f(x_*)\|\\
& \leq \|\tilde{x}_k -x_*\| + \eta L \sum_{j=1}^{\tau} \|x_{k-j} -x_*\|.
\end{split}
\end{equation}
Applying Theorem \ref{thm-2-1} and \eqref{eq-2-3} to \eqref{eq-3-10}, one gets
\begin{equation}\label{eq-2-5}
 \|x_k -x_* \| 
 \leq  R^k \|x_0 -x_*\| + \eta L B \sum_{j=1}^{\tau} R^{(k-\tau-j)},
\end{equation}
where $R := \Big(1-\frac{\eta \alpha}{ 1+q}\Big)^{1/2}$.
Here, we note that
\begin{equation}\label{R-eq-3-6}
\begin{split}
 \sum_{j=1}^{\tau} R^{(k-\tau-j)}& =R^{k-\tau-1} \sum_{j=1}^{\tau} R^{-j+1}
 \\
 & = R^{k-\tau-1} \left(\frac{R^{-\tau}-1}{R^{-1}-1}\right)
 \\
 & = R^{k-\tau} \left(\frac{R^{-\tau}-1}{1-R}\right).
 \end{split}
\end{equation} 
By \eqref{R-eq-3-6}, the inequality \eqref{eq-2-5} becomes
\begin{equation}\label{eq-2-7}
\|x_k -x_* \|\leq  R^{k-\tau} \left[R^{\tau}\|x_0 -x_*\|   + \eta LB\left(\frac{R^{-\tau}-1}{1 -R}\right)\right].
\end{equation}
Actually, the previous result \eqref{R-tau-eq-3-1} is rewritten as
\begin{equation*}
R^{-\tau} -1 \leq  J_{\tau/2}\left( \frac{\tau \eta \alpha}{2(1+q)}\right),
\end{equation*}
which gives
\begin{equation}\label{eta-LB-eq-3-8}
\begin{split}
\eta LB\left(\frac{R^{-\tau}-1}{1 -R}\right) & = \eta LB\left(\frac{(R^{-\tau}-1)(1+R)}{1 -R^2}\right) 
\\
& \leq \eta LB \left(\frac{ 1+q}{\eta\alpha}\right) \cdot J_{\tau/2} \left( \frac{ \tau \eta \alpha}{2(1+q)} \right)\cdot (1+R)
\\
& \leq \tau\eta LB  {J_{\tau/2}}.
\end{split}
\end{equation}
Applying \eqref{eta-LB-eq-3-8} to \eqref{eq-2-7}, we have
\begin{equation}\label{eq-2-8}
\|x_k -x_* \|\leq  R^{k-\tau} \left( R^{\tau} \|x_0 -x_*\|   +  B  J_{\tau/2}(L\tau \eta) \right).
\end{equation}
From the definition \eqref{B-eq-3-2} of $B$, one sees the following identity:
\begin{equation}\label{B-identity-eq-3-10}
R^{\tau}\|x_0 -x_*\|  +  B  {J_{\tau/2}(L\tau \eta)}  = B.
 \end{equation} 
Combining \eqref{eq-2-8} with \eqref{B-identity-eq-3-10}, we obtain
\begin{equation*}
\|x_k -x_* \| \leq   B \left(1-\frac{\eta \alpha}{ 1+q}\right)^{(k-\tau)/2},
\end{equation*}
which is the same with \eqref{eq-2-1} for $t=k$, and so the inductive arugment is complete.
\end{proof}

\vspace{0.2cm}

\section{Convergence analysis under the PL condition}\label{sec-PL}

In this section, we show the linear convergence result under the PL condition.

\vspace{0.1cm}

\begin{proof}[Proof of Theorem \ref{thm-pl}]
~Using the $L$-smoothness of $f$, we obtain that for each $t \geq \tau$,
\begin{equation}\label{pl-f-eq1}
f(x_{t+1}) - f(x_t) \leq - \eta \langle \nabla f(x_t), \nabla f(x_{t-\tau}) \rangle + \frac{L\eta^2}{2} \|\nabla f(x_{t-\tau})\|^2.
\end{equation}
One part of \eqref{pl-f-eq1} can be rewritten as
\begin{equation*}
\langle \nabla f(x_t), \nabla f(x_{t-\tau}) \rangle = \frac{1}{2} \|\nabla f(x_t)\|^2 + \frac{1}{2}\|\nabla f(x_{t-\tau})\|^2 - \frac{1}{2} \|\nabla f(x_t) - \nabla f(x_{t-\tau})\|^2,
\end{equation*}
and then the inequality \eqref{pl-f-eq1} becomes
\begin{equation}\label{eq-4-14}
f(x_{t+1}) -f(x_t) \leq - \frac{\eta}{2}\|\nabla f(x_t)\|^2 -\frac{1}{2}(\eta - L\eta^2) \|\nabla f(x_{t-\tau})\|^2 + \frac{\eta}{2}\|\nabla f(x_t) - \nabla f(x_{t-\tau})\|^2.
\end{equation}
Here, in order to estimate the last term in \eqref{eq-4-14}, we observe that if $\tau+1 \leq t \leq 2 \tau$, then we have $x_{t-\tau}  =x_{\tau}$ and
\begin{equation*} 
x_{t}  = x_{\tau} - \eta \sum_{j=0}^{t-\tau-1} \nabla f(x_j).    
\end{equation*}
Also, if $t \geq 2 \tau +1$, then
\begin{equation*}
 x_t = x_{\tau} - \eta \sum_{j=0}^{t-\tau-1} \nabla f(x_j) \quad \textrm{and}\quad 
x_{t-\tau} = x_{\tau} - \eta \sum_{j=0}^{t-2\tau-1}\nabla f(x_j),
\end{equation*}
so we have
\begin{equation*}
x_t -x_{t-\tau} = -\eta \sum_{j=t-2\tau}^{t-\tau-1} \nabla f(x_j).
\end{equation*}
By the $L$-smoothness \eqref{L-smooth} and combining the above results, one gets
\begin{equation*}
\begin{split}
\|\nabla f(x_t) - \nabla f(x_{t-\tau})\|&\leq L \|x_t - x_{t-\tau}\|
\\
&=\left\{\begin{array}{ll} L \eta  \|\sum_{j=t-\tau}^{t-1}\nabla f(x_{j-\tau})\|&\quad\textrm{ if }\,t \geq 2 \tau+1,
\\
L \eta \|\sum_{j= \tau}^{t-1}\nabla f(x_{j-\tau})\|&\quad\textrm{ if }\,\tau +1 \leq t \leq 2 \tau,
\\
 0&\quad\textrm{ if }\,t=\tau.  \end{array}
\right.
\end{split}
\end{equation*} 
Using the Cauchy-Schwartz inequality, we have
\begin{equation}\label{pl-f-eq2}
\|\nabla f(x_t)-\nabla f(x_{t-\tau})\|^2 \leq \eta^2 L^2 \tau \mathcal{R}_1(t),
\end{equation}
where
\begin{equation*}
\mathcal{R}_1(t) := \left\{\begin{array}{ll}   \sum_{j=t-\tau}^{t-1}\|\nabla f(x_{j-\tau})\|^2&\quad\textrm{ if }\,t \geq 2 \tau+1,
\\
  \sum_{j= \tau}^{t-1}\|\nabla f(x_{j-\tau})\|^2 &\quad\textrm{ if }\,\tau +1 \leq t \leq 2 \tau,
\\
 0&\quad\textrm{ if }\,t=\tau.  \end{array}
\right.
\end{equation*}
By \eqref{pl-f-eq2}, the inequality \eqref{eq-4-14} becomes
\begin{equation}\label{pl-f-eq3}
\begin{split}
&f(x_{t+1}) -f(x_t) 
\\
&\leq - \frac{\eta}{2}\|\nabla f(x_t)\|^2 -\frac{1}{2}(\eta - L\eta^2) \|\nabla f(x_{t-\tau})\|^2 + \frac{1}{2}\eta^3 L^2 \tau \mathcal{R}_1(t)
\\
&\leq - {\eta \mu} \Big( f(x_t) -f_*\Big) -\frac{1}{2}(\eta - L\eta^2) \|\nabla f(x_{t-\tau})\|^2 + \frac{1}{2}\eta^3 L^2 \tau  \mathcal{R}_1(t).
\end{split}
\end{equation}
Rearranging \eqref{pl-f-eq3}, we have the following estimate: for $t \geq \tau$,
\begin{equation}\label{eq-4-11}
\begin{split}
 f(x_{t+1}) -f_*  &\leq \left(1-  {\eta \zeta}\right)\Big( f(x_t) -f_*\Big) -\frac{1}{2}(\eta - L\eta^2) \|\nabla f(x_{t-\tau})\|^2   + \frac{1}{2}\eta^3 L^2 \tau \mathcal{R}_1(t).
\end{split}
\end{equation}
For simplicity, we set
$$
a_t := f(x_{t+\tau} ) -f_*,\qquad b_t :=\|\nabla f(x_t)\|^2.
$$
The sequential inequality \eqref{eq-4-11} can be rewritten as follows: For $t\geq 0$,
\begin{equation}\label{pl-a-eq20}
a_{t+1}  \leq \left\{\begin{array}{ll}c_1 a_0 - Q_1 b_0 &\quad \textrm{for } ~t=0,
\\
c_1 a_t +\delta_1 ( b_{t-1} + \cdots + b_0) - Q_1 b_t&\quad \textrm{for }~ 1 \leq t \leq \tau,
\\
c_1 a_t + \delta_1 (b_{t-1} + \cdots + b_{t-\tau}) - Q_1 b_t, &\quad \textrm{for }~t \geq \tau+1,
\end{array}\right.
\end{equation}
where
\begin{equation*}
\begin{aligned}
c_1:= 1- {\eta \zeta},\qquad \delta_1 := \frac{\eta^3 L^2 \tau}{2},\qquad Q_1 := \frac{1}{2}(\eta -L\eta^2).
\end{aligned}
\end{equation*}
To apply Lemma \ref{lem-2-2} to \eqref{pl-a-eq20}, we need to proceed to check the following condition:
\begin{equation*}
\delta_1 \left( \frac{1-c_1^j}{1-c_1}\right) \leq c_1^j Q_1,
\end{equation*}
which is reduced to
\begin{equation}\label{eq-4-12}
\frac{\eta^3 L^2 \tau}{2} \cdot \frac{1-c_1^j}{\eta \zeta}\leq c_1^j Q_1.
\end{equation}
If the above inequality holds, then we can apply Lemma \ref{lem-2-2} to \eqref{eq-4-11}, which is yielding the following estimate:
\begin{equation*}
f(x_{t+\tau}) -f_* \leq (1-\eta\zeta)^{t} \Big(f(x_{ \tau}) -f_*\Big).
\end{equation*}
Actually, this is the desired inequality, so we only need to check the condition \eqref{eq-4-12} for completing the proof.

Now, we show \eqref{eq-4-12} which is equivalent to
\begin{equation}\label{pl-eta-eq1}
\frac{\eta^2 L^2 \tau}{2\zeta} \leq \left( Q_1 + \frac{\eta^2 L^2 \tau}{2\zeta}\right) c_1^j.
\end{equation}
Since the right hand side of \eqref{pl-eta-eq1} decreases in $j$ due to $c_1<1$, it is sufficient to show \eqref{pl-eta-eq1} for only $j=\tau$.
From Lemma \ref{lem-2-1}, we recall that
\begin{equation}\label{pl-c-eq1}
c_1^{-\tau}  = \frac{1}{(1-\eta\zeta)^{\tau}}\leq 1 + J_{\tau}  {\eta \zeta \tau},
\end{equation}
provided that $\eta \zeta \tau \leq 1/5$.
If we choose $\eta>0$ satisfying
\begin{equation*}
\eta \leq \frac{2}{\left(\sqrt{1+4 J_{\tau}  \tau^2}+1\right)L},
\end{equation*}
then we have
\begin{equation*}
\eta^2 + \frac{\eta}{ J_{\tau} L \tau^2} \leq \frac{1}{ J_{\tau} L^2 \tau^2},
\end{equation*}
which is equivalent to
\begin{equation}\label{pl-J-eq1}
J_{\tau}  {\eta \zeta \tau}  \leq \frac{\zeta (1-L\eta)}{\eta L^2 \tau}.
\end{equation}
By \eqref{pl-J-eq1}, the inequality \eqref{pl-c-eq1} becomes
\begin{equation}\label{eq-4-10}
c_1^{-\tau} \leq 1 + \frac{\zeta(1-L\eta)}{\eta L^2 \tau}.
\end{equation}
Rearranging \eqref{eq-4-10}, we obtain \eqref{pl-eta-eq1} for $j=\tau$.
Hence, the required condition \eqref{eq-4-12} holds for $\eta>0$ satisfying \eqref{eq-1-10}, and then the proof is done.
\end{proof}

\vspace{0.2cm}

\section{Extension to stochastic gradient descent with time-varying delay}\label{sgd}

{In this section, we consider the stochastic gradient descent with time-varying delay. Namely, we consider the following algorithm: For a number $\boldsymbol{\tau}\in \mathbb{N}$, with $x_0 = x_1 = \cdots = x_{\boldsymbol{\tau}-1} =x_{\boldsymbol{\tau}}$,
\begin{equation}\label{eq-7-1}
x_{t+1} = x_t - \eta \left( \nabla f\left(x_{t-\tau (t)}\right) + \xi_{t}\right)\qquad \forall\,t \geq  \boldsymbol{\tau},
\end{equation}
where $\xi_{t}$  is a noise of zero mean and $\tau (t)$ is a time-varying delay with $0 \leq \tau (t) \leq  \boldsymbol{\tau}$. This algorithm has been studied by many authors (see \cite{AD, LSZ, NBB}), since it appears naturally for analyzing the parallel or distributed computing of the stochastic gradient descent. We consider the following assumption on the noise (cf. \cite{SK}):
\begin{ass}[$(M,\, \sigma^2)$-bounded noise]\label{ass-3} There exist two constants $M,\, \sigma^2 \geq 0$ such that 
\begin{equation}\label{eq-7-2}
\mathbb{E}\left[\,\xi_t \,|\,x_t\,\right] =\mathbf{0} \in \mathbb{R}^d,\qquad \mathbb{E}\left[\,\|\xi_t\|^2 \,|\,x_t\,\right] \leq M \left\|\nabla f\left(x_{t-\tau (t)}\right)\right\|^2 + \sigma^2.
\end{equation}
\end{ass}
We refer to \cite{AHSL} for a justification of the first condition of \eqref{eq-7-2}.  We will use the notation $\mathbb{E}_t$ to denote the expectation regarding the distribution of $\xi_{t}$ and  also use the notation $\mathbb{E}$ to denote the expectation over the whole time, i.e., $\mathbb{E} = \mathbb{E}_N \mathbb{E}_{N-1} \cdots \mathbb{E}_1 \mathbb{E}_0$, where $N$ denotes a large time that the iteration of \eqref{eq-7-1} terminates.

The convergence property of \eqref{eq-7-1} with constant time delay $\tau (t) \equiv \boldsymbol{\tau}$ was studied in the literatures \cite{ASS, SK}, when {\it Assumption} \ref{ass-3}: \eqref{eq-7-2} holds and the cost function is convex or strongly convex. In the following theorem, we extend the argument used in Theorem \ref{thm-pl} to obtain a convergence result of the algorithm \eqref{eq-7-1} under the PL condition.
\begin{thm}\label{thm-5-1}Suppose that $f:\mathbb{R}^d\rightarrow\mathbb{R}$ is $L$-smooth and it satisfies the PL condition \eqref{eq-PL}, and the noise $\xi_t$ in \eqref{eq-7-1} satisfies the condition \eqref{eq-7-2}.
If we assume that the stepsize $\eta>0$ and the bound of delay $\boldsymbol{\tau}>0$ satisfy the following inequality:
\begin{equation}\label{eq-7-10}
\eta \leq \min\left\{ \frac{L(1+M)}{5 \zeta},~ \frac{2\boldsymbol{\tau}}{\sqrt{1+4 J_{\boldsymbol{\tau}}  \boldsymbol{\tau}^2}+1 }\right\}\frac{1}{L(1+M)\boldsymbol{\tau}},
\end{equation}
then the algorithm \eqref{eq-7-1} satisfies the following estimate: For any $t \geq \boldsymbol{\tau}$,
\begin{equation}
\mathbb{E}\Big[ f(x_{t}) - f_* \Big] \leq   (1-\eta \zeta)^{t-\boldsymbol{\tau}} \left(f (x_{\boldsymbol{\tau}})-f_*\right) +  \left( \frac{\boldsymbol{\tau}^2 \eta^2 L^2}{2\zeta}+\frac{L\eta}{2\zeta}\right)\sigma^2.
\end{equation}
\end{thm}
\begin{proof}
For simplicity, we denote $g_t = \nabla f \left(x_{t-\tau (t)}\right) + \xi_{t}$ in the algorithm \eqref{eq-7-1}. Using the $L$-smoothness property of $f$, we have
\begin{equation}
f(x_{t+1}) - f(x_t) \leq -\eta \left\langle \nabla f(x_t),~g_{t}\right\rangle + \frac{L \eta^2}{2} \|g_t\|^2\qquad \forall\,t \geq \boldsymbol{\tau}.
\end{equation}
Taking the expectation $\mathbb{E}_t$ regarding the distribution of $\xi_t$ and using \eqref{eq-7-2} here, one yields
\begin{equation}\label{eq-7-5}
\begin{split}
&\mathbb{E}_t \Big[ f(x_{t+1}) - f(x_t)\Big] 
\\
& \leq - \eta \langle \nabla f(x_t), ~\nabla f(x_{t-\tau (t)})\rangle + \frac{L\eta^2}{2} \mathbb{E}_t\|g_t\|^2
\\
& = - \frac{\eta}{2} \Big( \|\nabla f(x_t)\|^2 + \|\nabla f(x_{t-\tau (t)})\|^2 - \|\nabla f(x_t) - \nabla f(x_{t-\tau (t)})\|^2\Big) + \frac{L\eta^2}{2} \mathbb{E}_t \|g_{t}\|^2.
\end{split}
\end{equation}
By \eqref{eq-7-2}, we get
\begin{equation}\label{eq-7-3}
\mathbb{E}_t \|g_t\|^2 =  \|\nabla f(x_{t-\tau (t)})\|^2  + \mathbb{E}_t\|\xi_{t}\|^2 \leq (1+M)\|\nabla f(x_{t-\tau (t)})\|^2   +\sigma^2.
\end{equation}
Using the $L$-smoothness property of $f$ and the fact that $\tau (t) \leq \boldsymbol{\tau}$, we also have the following estimate:
\begin{equation}
\begin{split}
\left\|\nabla f(x_t) - \nabla f\left(x_{t-\tau (t)}\right)\right\|&\leq L \left\|x_t - x_{t-\tau (t)}\right\|
\\
&\leq L \sum_{k=0}^{\boldsymbol{\tau} -1} \left\|x_{t-k} -x_{t-k-1}\right\|
\\
&=\left\{\begin{array}{ll} L \eta  \sum_{j=t-\boldsymbol{\tau}}^{t-1}\|g_j\|&\quad\textrm{ if }\,t \geq 2 \boldsymbol{\tau}+1,
\\
L \eta  \sum_{j= \boldsymbol{\tau}}^{t-1}\|g_j\|&\quad\textrm{ if }\,\boldsymbol{\tau} +1 \leq t \leq 2 \boldsymbol{\tau},
\\
 0&\quad\textrm{ if }\,t=\boldsymbol{\tau}.  \end{array}\right.
\end{split}
\end{equation}
Combining this with \eqref{eq-7-3}, one has  for $t \geq 2 \tau +1$ the following estimate 
{\begin{equation}
\begin{split}
\mathbb{E}_t \left\|\nabla f(x_t) - \nabla f\left(x_{t-\tau (t)}\right)\right\|^2 &\leq \boldsymbol{\tau} \eta^2 L^2 \sum_{k=t-\boldsymbol{\tau}}^{t-1} \mathbb{E}_t\|g_t\|^2
\\
&\leq \boldsymbol{\tau} \eta^2 L^2 \sum_{k=t-\boldsymbol{\tau}}^{t-1}(1+M)\left\|\nabla f\left(x_{k-\tau (k)}\right)\right\|^2 + \boldsymbol{\tau}^2 \eta^2 L^2 \sigma^2.
\end{split}
\end{equation}}
The above estimate can be obtained with the range of the summation $\sum_{k=t-\boldsymbol{\tau}}^{t-1}$ replaced by  $\sum_{k=\boldsymbol{\tau}}^{t-1}$ and $0$ for the cases $\boldsymbol{\tau}+1 \leq t \leq 2 \boldsymbol{\tau}$ and $t=\boldsymbol{\tau}$, respectively.

Inserting this estimate and \eqref{eq-7-3} into \eqref{eq-7-5} for the case $t \geq 2\boldsymbol{\tau} +1$ gives
\begin{equation} 
\begin{split}
&\mathbb{E}_t\Big[ f(x_{t+1}) - f(x_t)\Big] 
\\
& \leq  - \frac{\eta}{2}   \|\nabla f(x_t)\|^2  -\left(\frac{\eta}{2} - \frac{L(1+M)\eta^2}{2}\right) \left\|\nabla f\left(x_{t-\tau (t)}\right)\right\|^2 + \frac{L\eta^2 \sigma^2}{2}
\\
&\qquad +\frac{\boldsymbol{\tau} \eta^3 L^2 (1+M)}{2} \sum_{k=t-\boldsymbol{\tau}}^{t-1}\left\|\nabla f\left(x_{k-\tau (k)}\right)\right\|^2 + \frac{\boldsymbol{\tau}^2 \eta^3 L^2 \sigma^2}{2}
\\
& \leq- {\eta}\zeta   (f(x_t) -f_*)  -\left(\frac{\eta}{2} - \frac{L(1+M)\eta^2}{2}\right) \left\|\nabla f\left(x_{t-\tau (t)}\right)\right\|^2 \\
&\qquad + \frac{\boldsymbol{\tau} \eta^3 L^2  (1+M)^2}{2} \sum_{k=t-\boldsymbol{\tau}}^{t-1}\left\|\nabla f\left(x_{k-\tau (k)}\right)\right\|^2
+\left( \frac{\boldsymbol{\tau}^2 \eta^3 L^2  \sigma^2}{2}+\frac{L\eta^2 \sigma^2}{2}\right).
\end{split}
\end{equation}
Since $f(x_{t+1}) -f(x_t) = (f(x_{t+1})-f_*) - (f(x_t)-f_*)$, and taking the expectation over the whole $t \geq \boldsymbol{\tau}$ in the above estimate, we get
\begin{equation}\label{eq-7-6} 
\begin{split}
&\mathbb{E}\Big[ f(x_{t+1}) - f_*\Big] 
\\
& \leq(1-  {\eta}\zeta)\,\mathbb{E}\Big[ f(x_{t}) - f_*\Big]  -\left(\frac{\eta}{2} - \frac{L(1+M)\eta^2}{2}\right) \mathbb{E}\left\|\nabla f\left(x_{t-\tau (t)}\right)\right\|^2 
\\
&\qquad+ \frac{\boldsymbol{\tau} \eta^3 L^2 (1+M)^2}{2} \sum_{k=t-\boldsymbol{\tau}}^{t-1}\mathbb{E}\left\|\nabla f\left(x_{k-\tau (k)}\right)\right\|^2 +\left( \frac{\boldsymbol{\tau}^2 \eta^3 L^2 \sigma^2}{2}+\frac{L\eta^2 \sigma^2}{2}\right).
\end{split}
\end{equation}
The above inequality holds with the range of the summation $\sum_{k=t-\boldsymbol{\tau}}^{t-1}$ replaced by  $\sum_{k=\boldsymbol{\tau}}^{t-1}$ and $0$ for the cases $\boldsymbol{\tau}+1 \leq t \leq 2 \boldsymbol{\tau}$ and $t=\boldsymbol{\tau}$, respectively.

In order to find a bound of $\mathbb{E}\left[ f(x_{t}) - f_*\right]$ from the above inequality, we consider two sequences $\{a_{t}\}_{t\geq \boldsymbol{\tau}}$ and $\{w_t\}_{t\geq \boldsymbol{\tau}}$ as follows: With $a_{\boldsymbol{\tau}} = f(x_{\boldsymbol{\tau}})-f_*$ and $w_{\boldsymbol{\tau}} =0$, the sequences are defined by
\begin{equation*}
a_{t+1} = \left(1-\eta \zeta\right) a_t -\left(\frac{\eta}{2}-\frac{L(1+M)\eta^2}{2}\right)b_t + \frac{\boldsymbol{\tau} \eta^3 L^2 (1+M)^2}{2}\left\{\begin{array}{ll}\sum_{k=t-\boldsymbol{\tau}}^{t-1} b_k &\textrm{if}\,\,t \geq 2\boldsymbol{\tau}+1
\medskip
\\
\sum_{k=\boldsymbol{\tau}}^{t-1} b_k &\textrm{if}\,\, \boldsymbol{\tau}+1\leq t \leq  2\boldsymbol{\tau}
\medskip
\\
0 &\textrm{if}\,\, t =\boldsymbol{\tau}
\end{array}\right.
\end{equation*}
where $b_k := \mathbb{E} \left\|\nabla f\left(x_{k-\tau(k)}\right)\right\|^2$ and 
\begin{equation}\label{eq-7-8}
w_{t+1} = \left(1-\eta \zeta\right) w_t +\left( \frac{\boldsymbol{\tau}^2 \eta^3 L^2 \sigma^2}{2}+\frac{L\eta^2 \sigma^2}{2}\right)\qquad \forall\,t \geq \boldsymbol{\tau}.
\end{equation}
Then the sequence $\{(a_t +w_t)\}_{t \geq 0}$ satisfies the equality case of \eqref{eq-7-6} with $\mathbb{E}[f(x_t)-f_*]$ replaced by $(a_t +w_t)$, and the initial conditions are same, i.e., $(a_{\boldsymbol{\tau}} +w_{\boldsymbol{\tau}}) = f(x_{{\boldsymbol{\tau}}})-f_*$. Therefore, we have
\begin{equation}\label{eq-7-7}
\mathbb{E}\Big[ f(x_{t}) - f_*\Big] \leq a_t +w_t\qquad \forall\,t \geq \boldsymbol{\tau}.
\end{equation}
The sequence $\{a_t\}_{t\geq \boldsymbol{\tau}}$ satisfies the same sequential inequality \eqref{pl-a-eq20} with the sequence $\{a_t\}_{t \geq \boldsymbol{\tau}}$ defined in the proof of  Theorem \ref{thm-pl} with $L$ replaced by  $L(1+M)$ and $\tau$ replaced by $\boldsymbol{\tau}$. Therefore, the same analysis applies to yield the following bound:
\begin{equation}
a_{t} \leq (1-\eta \zeta)^{t-{\boldsymbol{\tau}}}a_{{\boldsymbol{\tau}}},
\end{equation}
provided that the stepsize $\eta>0$ satisfies \eqref{eq-7-10}.

As for $w_{t}$, we solve the equation \eqref{eq-7-8} recursively to find for $t \geq {\boldsymbol{\tau}}+1$,
\begin{equation}
\begin{split}
w_t & = (1-\eta \zeta)^{t-{\boldsymbol{\tau}}} w_{{\boldsymbol{\tau}}} + \sum_{j=0}^{t-{\boldsymbol{\tau}}-1} (1-\eta \zeta)^{j}\left( \frac{{\boldsymbol{\tau}}^2 \eta^3 L^2 \sigma^2}{2}+\frac{L\eta^2 \sigma^2}{2}\right)
\\
& \leq (1-\eta \zeta)^{t-{\boldsymbol{\tau}}} w_{{\boldsymbol{\tau}}}  + \sum_{j=0}^{\infty} (1-\eta \zeta)^{j} \left( \frac{{\boldsymbol{\tau}}^2 \eta^3 L^2 \sigma^2}{2}+\frac{L\eta^2 \sigma^2}{2}\right)
\\
&= (1-\eta \zeta)^{t-{\boldsymbol{\tau}}} w_{{\boldsymbol{\tau}}}  +  \left( \frac{{\boldsymbol{\tau}}^2 \eta^2 L^2}{2\zeta}+\frac{L\eta}{2\zeta}\right)\sigma^2.
\end{split}
\end{equation}
Combining the above two estimates in \eqref{eq-7-7}, we finally get the following estimate:
\begin{equation}
\mathbb{E}\Big[ f(x_{t}) - f_* \Big] \leq   (1-\eta \zeta)^{t-{\boldsymbol{\tau}}} (f (x_{{\boldsymbol{\tau}}})-f_*) +  \left( \frac{{\boldsymbol{\tau}}^2 \eta^2 L^2}{2\zeta}+\frac{L\eta}{2\zeta}\right)\sigma^2.
\end{equation}
The proof of this theorem is finished.
\end{proof}

}

\vspace{0.2cm}

\section{Numerical experiments}\label{sec-4}

In this section, we firstly try to confirm the analyzed result in Theorem \ref{thm-1-5} regarding the gradient descent with delay in two settings: least-squares regression and logistic classification (cf. \cite{SBBLMO}).
Moreover, we show some numerical examples satisfying the PL inequality in order to confirm Theorem \ref{thm-pl}.

\subsection{Least-squares Regression}

Consider the regularized least-squares regression problem satisfying the optimization problem: $\min_{x\in\mathbb{R}^d}f(x)$, where
\begin{equation}\label{fx-eq-4-1}
f(x):=\frac{1}{m}\sum_{i=1}^m\left(y_i-A_i \cdot x\right)^2+\frac{\mu}{2}\|x\|^2,
\end{equation}
where $A_i\in\mathbb{R}^d$ is a vector containing the $i$-th data point, and $y_i\in\mathbb{R}$ is the $i$-th associated value.
In this experiment, we choose $m=1,000$ data points with Gaussian random variables $A_i\sim\mathcal{N}({\bf 0},{\bf 1})$ of mean ${\bf 0}$ and variance ${\bf 1}$ in the fixed setting $d=10$.
Also, we set $y_i=A_i \cdot {\bf 1}+\cos\left( A_i \cdot {\bf 1}\right)+\xi_i$, where $\xi_i\sim\mathcal{N}(0,1/4)$ is an i.i.d. Gaussian noise of variance $1/4$.

When the cost function $f(x)$ is given by \eqref{fx-eq-4-1} with $\mu=0.1$, we simulate the concerned gradient descent scheme \eqref{eq-1-1} with the delay $\tau=5$, $10$, $20$, $100$, and then we try to confirm the behavior of the error $\|x_t-x_*\|$ in Theorem \ref{thm-1-5}.
{The minimizer $x_*\in\mathbb{R}^d$ of the given $f(x)$ can be written as
$$
x_*=\left(\sum_{i=1}^m A_i A_i^T + \frac{m\mu}{2}I\right)^{-1}\left(\sum_{i=1}^m A_i \,y_i\right),
$$
where $I\in\mathbb{R}^{d\times d}$ is the identity matrix.
We set the initial point $x_0=0$ and for each iteration $t\geq \tau$, we define the log-scaled error $\mathcal{E}_t$ as
\begin{equation}\label{err-def}
\mathcal{E}_t:=\ln\left(\frac{\|x_t -x_*\|}{\|x_0 - x_*\|}\right).
\end{equation}

\begin{figure}[htbp]
\begin{center}
\subfigure[$\tau=5$]{
\includegraphics[height=5.9cm]{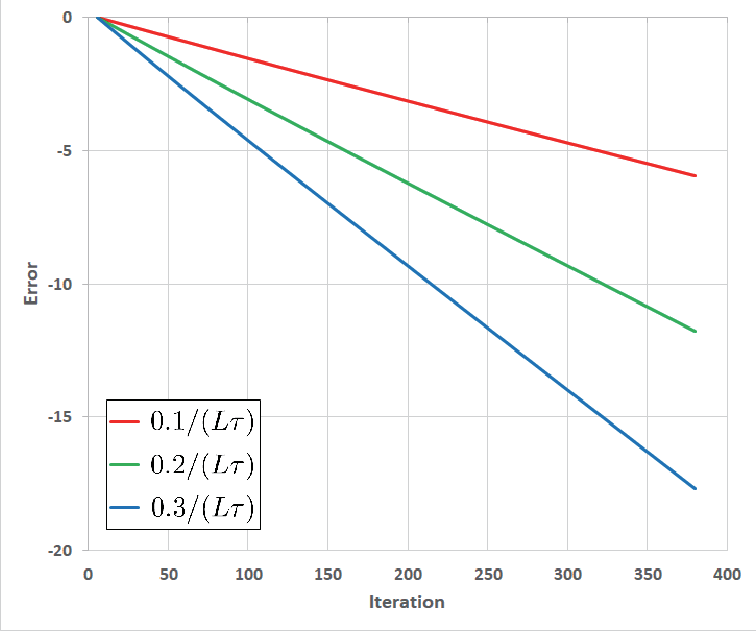}}
\quad
\subfigure[$\tau=10$]{
\includegraphics[height=5.9cm]{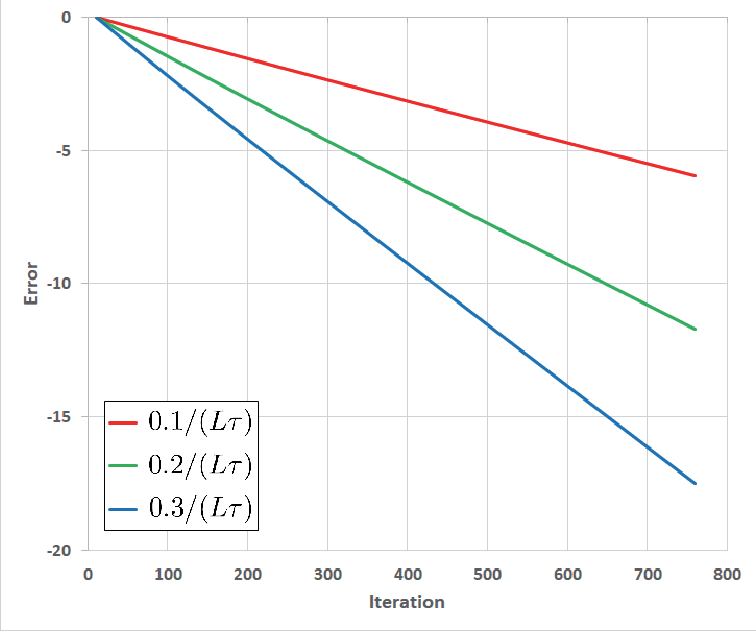}}\\
\subfigure[$\tau=20$]{
\includegraphics[height=5.9cm]{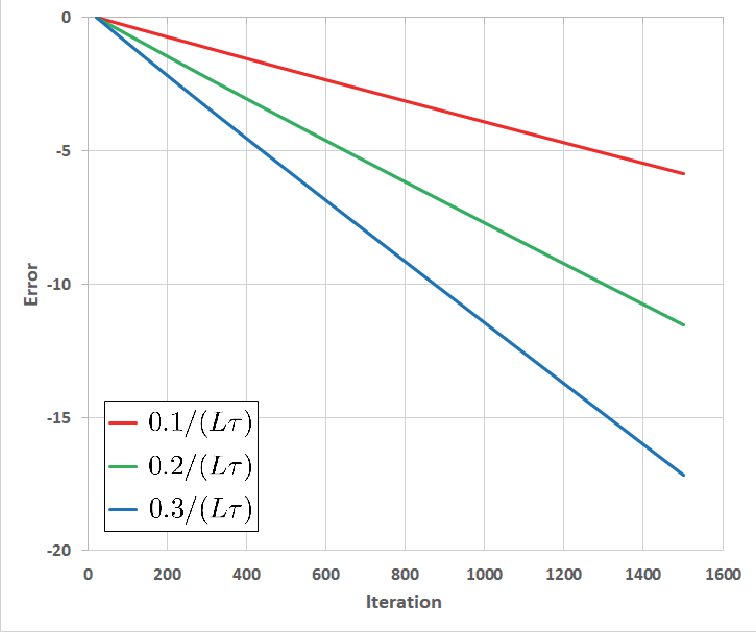}}
\quad
\subfigure[$\tau=100$]{
\includegraphics[height=5.9cm]{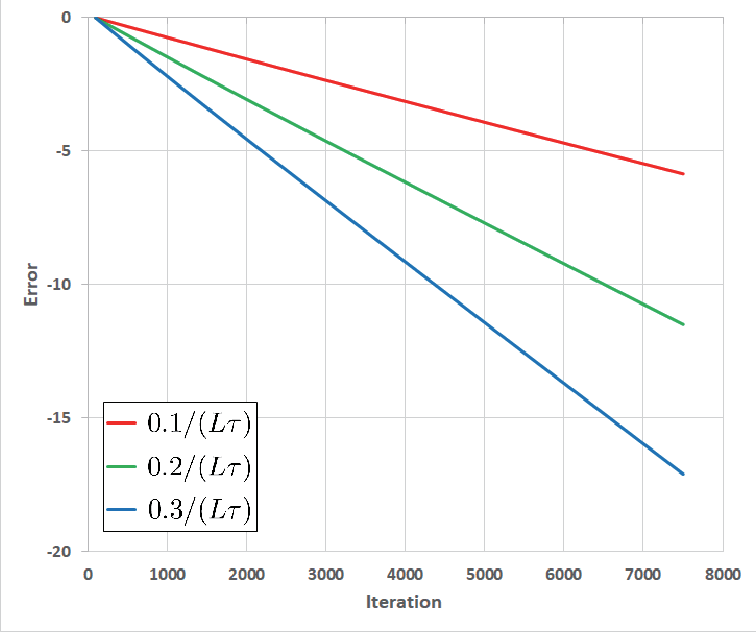}}
\end{center}
\caption{Graphs of the log-scaled error $\mathcal{E}_t$ computed by the gradient descent with various delay $\tau=5$, $10$, $20$, $100$, regarding the least-squares regression problem.}\label{fig1}
\end{figure}

On Figure \ref{fig1}, we depict some errors $\mathcal{E}_t$ variously simulated by setting $\eta=0.1/(L\tau)$, $0.2/(L\tau)$, $0.3/(L\tau)$ for each delay $\tau$. These stepsizes satisfy the condition \eqref{eq-1-30} of \mbox{Theorem \ref{thm-1-1}} in view of the values $C_{\tau}$ computed in Table \ref{values}.
Since
$$
\begin{aligned}
\|\nabla f(y)-\nabla f(x)\|&=\left\|\left(\frac{2}{m}\sum_{i=1}^m A_i A_i^T+\mu I\right)(y-x)\right\|\\
&\leq\left\|\,\frac{2}{m}\sum_{i=1}^m A_i A_i^T+\mu I\,\right\|\|y-x\|,
\end{aligned}
$$
the constant $L>0$ can be obtained by the largest singular value of $\frac{2}{m}\sum_{i=1}^m A_i A_i^T+\mu I$.
}
Seeing all graphs in Figure \ref{fig1}, we observe that the numerical values of $\mathcal{E}_t$ decay linearly with respect to $t$ for any delay $\tau$ as expected by Theorem \ref{thm-1-5}.
 Compared with the number of iterations for each delay $\tau$, one additionally sees that the convergence of $\|x_t-x_*\|$ is faster  when $\tau$ is chosen smaller.

\subsection{Logistic Classification}

Consider the logistic classification problem: $\min_{x\in\mathbb{R}^d}f(x)$, where
\begin{equation}\label{fx-eq-4-2}
f(x) := \frac{1}{m}\sum_{i=1}^m \ln \left(1 + e^{-y_i A_i\cdot x}\right) + \frac{\mu}{2} \|x\|^2,
\end{equation}
where $A_i\in\mathbb{R}^d$ is a vector containing the $i$-th data point, and $y_i\in\{-1,1\}$ is the $i$-th associated class assignment.
In this experiment, we try to choose $m=1,000$ data points: $500$ data points for the first class and $500$ data points for the second class. Each data point $A_i\sim\mathcal{N}(y_i{\bf 1},{\bf 1})$ is the Gaussian random variable of mean $y_i{\bf 1}$ and variance ${\bf 1}$, where
$$
y_i=\left\{
\begin{aligned}
&1&&~~\mbox{ if }~1\leq i\leq 500,\\
&-1&&~~\mbox{ otherwise}.
\end{aligned}
\right.
$$

Using the cost function $f(x)$ of \eqref{fx-eq-4-2} with $\mu=0.1$, we try to show the behavior of the error $\|x_t-x_*\|$ by simulating the concerned gradient descent scheme \eqref{eq-1-1} with the delay $\tau=5$, $10$, $20$, $100$.
{Actually, it is difficult to find the minimizer $x_*\in\mathbb{R}^d$ of the given $f(x)$.
To measure the convergence of error, we set the initial point $x_0=0$ and the minimizer $x_*\in\mathbb{R}^d$ is approximately found by \eqref{eq-1-1} within the tolerance ${\rm TOL}=10^{-10}$.

\begin{figure}[htbp]
\begin{center}
\subfigure[$\tau=5$]{
\includegraphics[height=5.9cm]{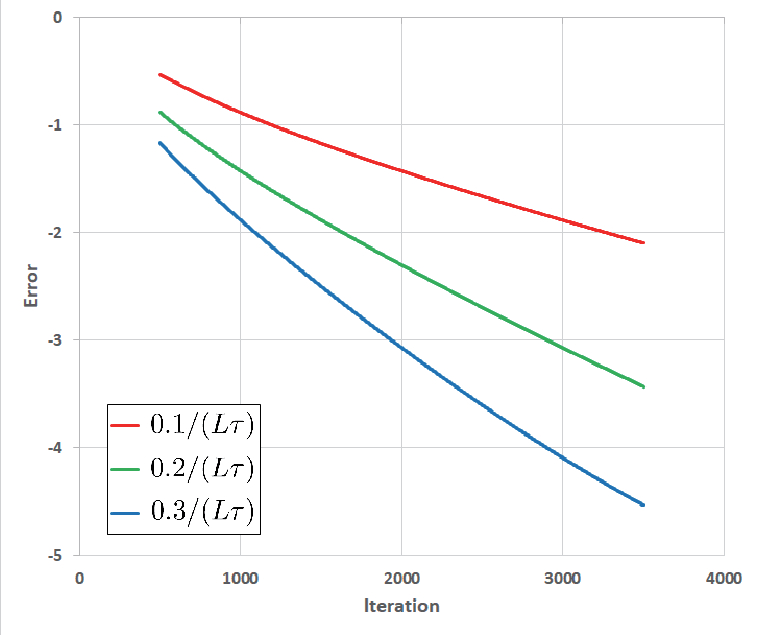}}
\quad
\subfigure[$\tau=10$]{
\includegraphics[height=5.9cm]{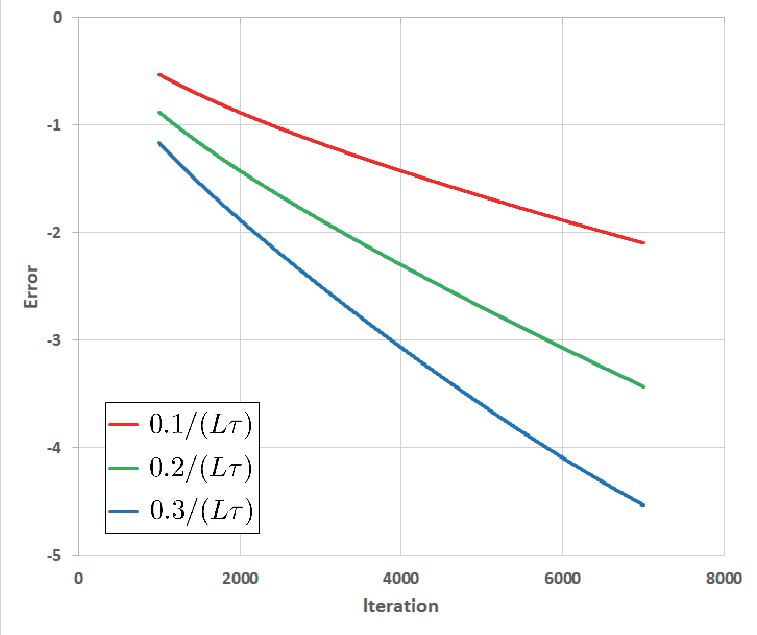}}\\
\subfigure[$\tau=20$]{
\includegraphics[height=5.9cm]{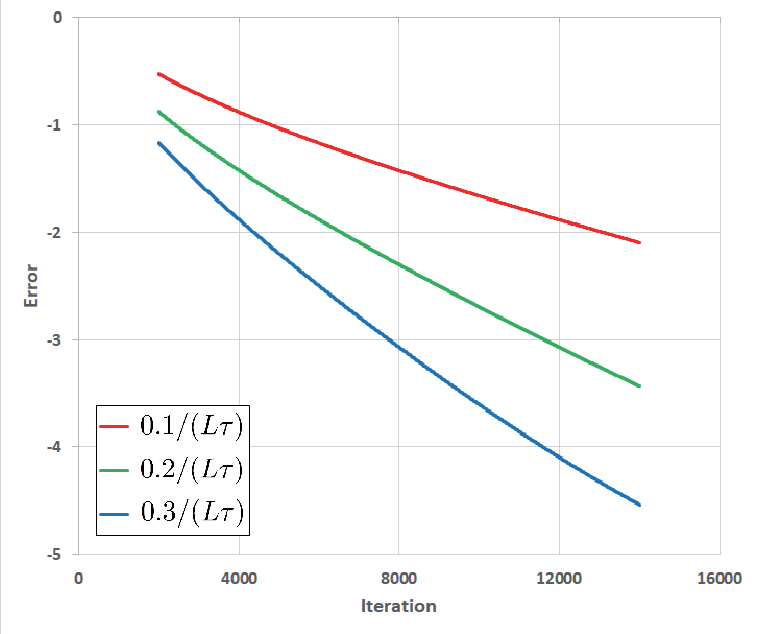}}
\quad
\subfigure[$\tau=100$]{
\includegraphics[height=5.9cm]{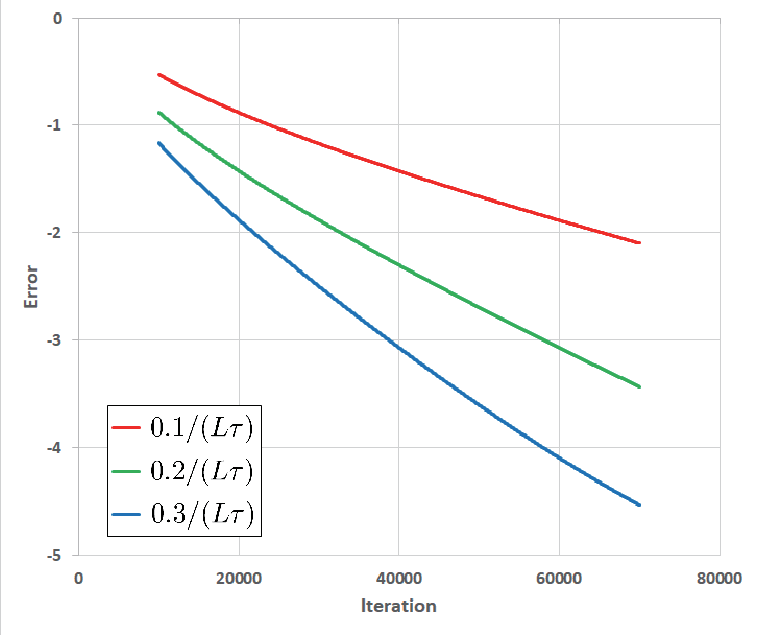}}
\end{center}
\caption{Graphs of the log-scaled error $\mathcal{E}_t$ computed by the gradient descent with various delay $\tau=5$, $10$, $20$, $100$, regarding the logistic classification problem.}\label{fig2}
\end{figure}

On Figure \ref{fig2}, we show the graphs of $\mathcal{E}_t$ obtained by \eqref{err-def} for each delay $\tau=5$, $10$, $20$, $100$ with various learning rate $\eta=0.1/(L\tau)$, $0.2/(L\tau)$, $0.3/(L\tau)$.
In this experiment, the constant $L$ can be computed by
\begin{equation}\label{experiment2-L}
L=\frac{1}{m}\sum_{i=1}^m \left\|A_i A_i^T\right\|+\mu.
\end{equation}
Here, the reason of \eqref{experiment2-L} is as follows: Since
$$
\begin{aligned}
\nabla f(x)&=-\frac{1}{m}\sum_{i=1}^m\left(\frac{y_i \,e^{-y_i A_i\cdot x}}{1+e^{-y_i A_i\cdot x}}\right)A_i+\mu x,\\
\nabla^2 f(x)&=\frac{1}{m}\sum_{i=1}^m\left(\frac{y_i^2 \, e^{-y_i A_i\cdot x}}{\left(1+e^{-y_i A_i\cdot x}\right)^2}\right)A_i A_i^T +\mu I,
\end{aligned}
$$
we have
$$
\begin{aligned}
\|\nabla f(y)-\nabla f(x)\|&=\left\|\,\int_0^1 \frac{d}{ds}\left(\nabla f(x+s(y-x))\right)ds\,\right\|\\
&=\left\|\,\int_0^1 \nabla^2 f(x+s(y-x))\,(y-x)\,ds\,\right\|\\
&\leq\int_0^1 \left\|\nabla^2 f(x+s(y-x))\right\|\left\|y-x\right\|ds\\
&\leq\int_0^1 \left(\frac{1}{m}\sum_{i=1}^m \left\|A_i A_i^T\right\|+\mu\right)\left\|y-x\right\|ds\\
&\leq L\|y-x\|.
\end{aligned}
$$
}
From all the graphs in Figure \ref{fig2}, we observe the linear decay property of  $\mathcal{E}_t$ which supports  the result of Theorem \ref{thm-1-5}.

\subsection{Numerical example satisfying PL condition}

In this subsection, we show the reliability of the analyzed result in Theorem \ref{thm-pl}.
Consider the following cost function:
\begin{equation}\label{f-def}
f(x) = \frac{1}{2} \|Ax -b\|^2,
\end{equation}
where $A \in \mathbb{R}^{m\times d}$ and $b \in \mathbb{R}^m$. If $d>m$, the cost $f(x)$ of \eqref{f-def} is not strongly convex, but it satisfies PL inequality for any $A$ and $b$ (refer to \cite{KNS}).
To construct the numerical example satisfying PL condition, we set $d=15$ and $m=6$, and moreover, all elements of $A$ and $b$ are independently generated by the Gaussian random variable of mean $0$ and variance $1$.

With the cost $f(x)$ given by \eqref{f-def}, we try to simulate the gradient descent \eqref{eq-1-1} with the delay $\tau=25$, based on the initial point $x_0=0$.
{Since
$$
\|\nabla f(y)-\nabla f(x)\|=\|A^T A(y-x)\|\leq \|A^T A\|\,\|y-x\|,
$$
the constant $L>0$ can be obtained by the largest singular value of $A^T A$.
 
Choosing a sufficiently small number {$\zeta>0$} used in \eqref{eq-PL}, the range of learning rate $\eta>0$ assumed in Theorem \ref{thm-pl} becomes
\begin{equation}\label{eq-5-1}
\eta \leq \frac{D_{\tau}}{L\tau} \quad \textrm{with} \quad D_{\tau} := \frac{2\tau}{\sqrt{1+4 J_{\tau}\tau^2}+1} \simeq  {0.9305.}
\end{equation}
More precisely, we want the following equality in  the range \eqref{eq-1-10} of the stepsize $\eta$ of Theorem \ref{thm-pl}:
\begin{equation}
\min \Big\{ \frac{L}{5\zeta}, ~\frac{2\tau}{\sqrt{1+4J_{\tau} \tau^2 }+1}\Big\}   =\frac{2\tau}{\sqrt{1+4J_{\tau} \tau^2 }+1}.
\end{equation}
Since the value $\frac{2\tau}{\sqrt{1+4J_{\tau} \tau^2 }+1} \simeq 0.9305$ for $\tau =25$ and $L = \sigma_{max}(AA^T)$, it is enough to choose  $\zeta$ such that
\begin{equation}
\frac{\sigma_{max}(AA^T)}{5\zeta} \geq \frac{2\tau}{\sqrt{1+4J_{\tau} \tau^2 }+1} \simeq 0.9305.
\end{equation}}
For each iteration $t\geq \tau$, we define the log-scaled cost function error $e_t$ as
\begin{equation}\label{error-et}
e_t:=\ln\left(\frac{f(x_t)-f_*}{f(x_\tau)-f_*}\right),
\end{equation}
where $f_*$ denotes the minimal value of the given cost $f$, i.e., $f_*=f(x_*)$.
Actually, the minimizer $x_*\in\mathbb{R}^d$ can be obtained by $x_*=A^+ b$, where $A^+$ is the pseudo-inverse of $A$.

\begin{figure}[htbp]
\begin{center}
\subfigure[$\eta=\frac{C}{L\tau}$ for $C\leq 1.05$ and $\eta=\frac{1}{20L(\tau+1)}$]{
\includegraphics[height=5.5cm]{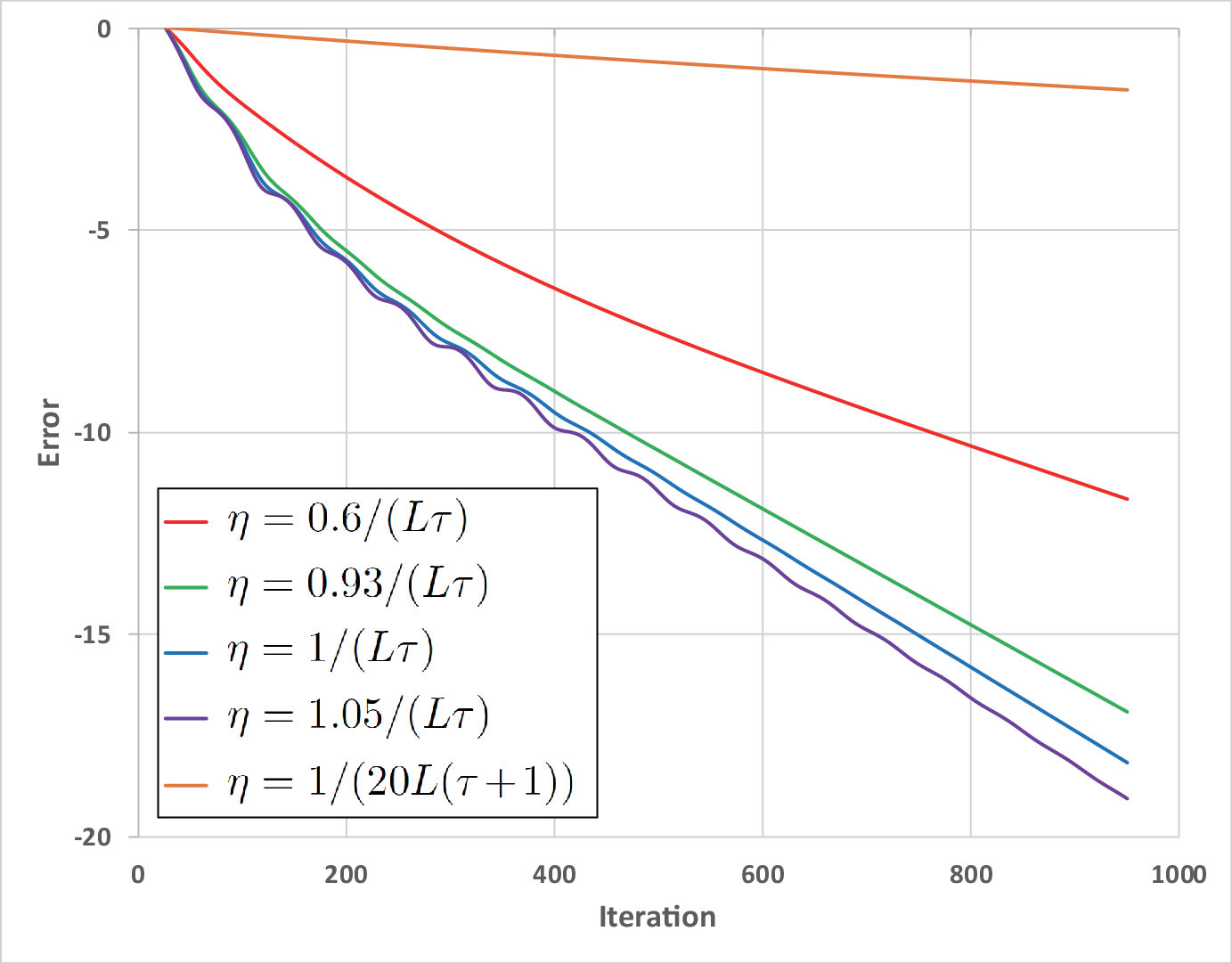}\label{fig3-a}}\\
\subfigure[$\eta=\frac{C}{L\tau}$ for $1.1\leq C\leq 1.4$]{
\includegraphics[height=5.5cm]{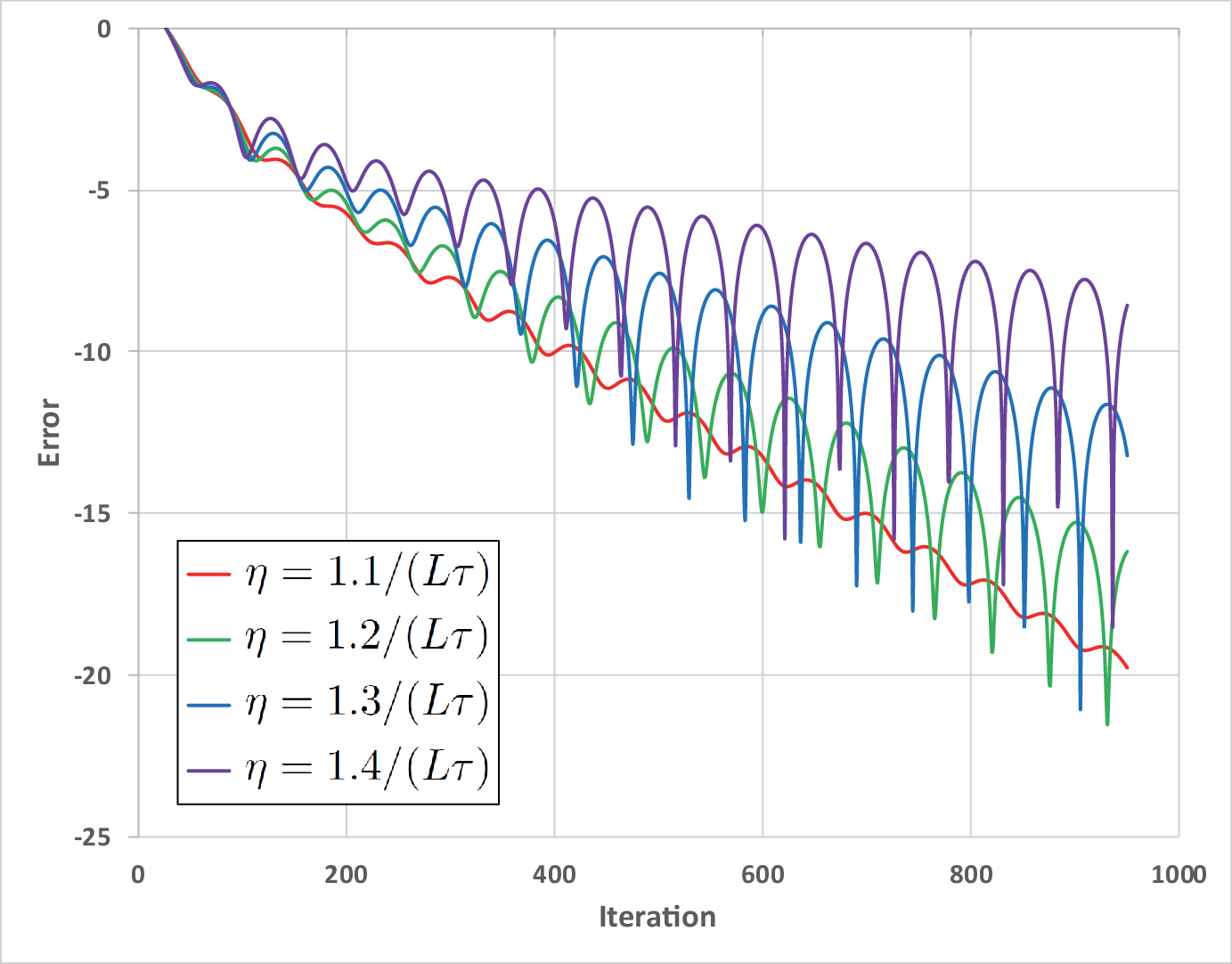}\label{fig3-b}}
\quad
\subfigure[$\eta=\frac{C}{L\tau}$ for $C\geq 1.5$]{
\includegraphics[height=5.5cm]{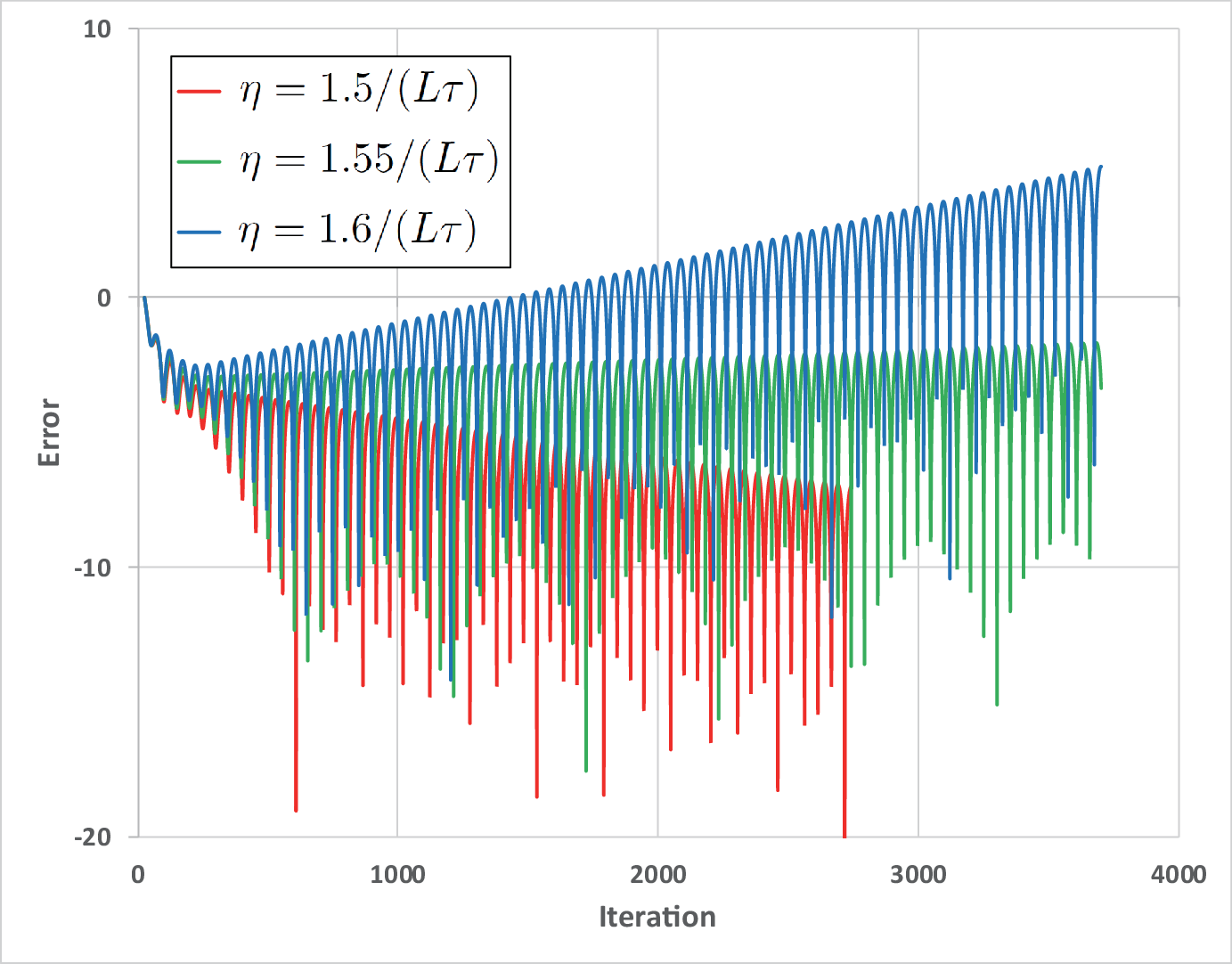}\label{fig3-c}}
\end{center}
\caption{Graphs of the log-scaled cost error $e_t$ for the cost function $f(x)=2^{-1}\|Ax-b\|^2$, computed by the gradient descent with the delay $\tau=25$.}\label{fig3}
\end{figure}

{On Figure \ref{fig3}, we present the graphs of $e_t$ with respect to the iteration $t$ for some learning rates such as $\eta=C/(L\tau)$ for some $C>0$ and $\eta=1/(20L(\tau+1))$. Here, the case of $\eta=1/(20L(\tau+1))$ is mentioned in \cite{ASS}.
We remark that the assumption \eqref{eq-5-1} is satisfied in two cases: $\eta=0.6/(L\tau)$ and $0.93/(L\tau)$.
Seeing the graphs of Figure \ref{fig3-a}, it is observed that each error $e_t$ for $\eta=0.6/(L\tau)$ and $0.93/(L\tau)$ monotonically decays with linear rate, which is equivalent to the analyzed result \eqref{analyzed_PL} in Theorem \ref{thm-pl}. Actually, the monotonically linearly decreasing property of $e_t$ only holds for $\eta\leq 1/(L\tau)$, which implies that the analyzed condition of learning rate in Theorem \ref{thm-pl} may be almost necessary and sufficient to guarantee the linear rate of $e_t$.
On the other hand, the oscillation of $e_t$ for $\eta\geq 1.05/(L\tau)$ can be found on Figure \ref{fig3}, which means that the error $e_t$ is not monotonically decreasing.
Furthermore, one sees that the error $e_t$ can blow up for a large learning rate $\eta$ on Figure \ref{fig3-c}.
}

{\subsection{Numerical example for stochastic gradient descent}

Here, we provide a numerical experiment for the stochastic gradient descent with time-varying delay. We consider the logistic classification problem \eqref{fx-eq-4-2} with the Wisconsin breast cancer dataset from UCI machine learning repository \cite{DG}. Letting $f_i(x):=\ln\left(1+e^{-y_i \,A_i\cdot x}\right)+\frac{\mu}{2}\|x\|^2$, we implement the following mini-batch stochastic gradient descent with time-varying delay:
\begin{equation}\label{eq-6-1}
x_{t+1} = x_t - \eta \left(\frac{1}{B} \sum_{k=1}^B \nabla f_{n_k(t)} \left(x_{t-\tau (t)}\right)\right)\qquad\mbox{ for }\,t \geq \boldsymbol{\tau},
\end{equation}
where $B \in \mathbb{N}$ is the batch size, $n_k(t)$ is a number randomly selected from $\{1,\,2,\,\cdots,\, m\}$ for each $1 \leq k \leq B$ and $t \geq \boldsymbol{\tau}$, and $\tau(t)$ is a time-varying delay randomly selected from $\{0,\,1,\,\cdots,\,\boldsymbol{\tau}\}$.
When $B=20$ or $100$, we simulate the algorithm \eqref{eq-6-1} with constant stepsizes $\eta=1.5\times 10^{-2}/\boldsymbol{\tau}$ and $6.0\times 10^{-2}/\boldsymbol{\tau}$ for iterations up to $10^6$ and delay bounds $\boldsymbol{\tau}=10$ and $50$.
In view of Theorem \ref{thm-5-1}, if we want to get an exact convergence of \eqref{eq-6-1} to the optimal value, a reasonable choice is to repeat diminishing the stepsize (cf. \cite{LZL}).
So, in this experiment, we also consider a cascading stepsize, that is, we repeat reducing the stepsize by half on some certain iterations.
For $\boldsymbol{\tau}=10$, we choose $\eta = 6.0\times 10^{-2}/\boldsymbol{\tau}$ for the initial stepsize, and repeat reducing the stepsize by half on each iteration $t\in\{1.0\times 10^5,\, 1.5\times 10^5,\, 3.5\times 10^5\}$.
Similarly, for $\boldsymbol{\tau}=50$, we choose the same initial stepsize $\eta = 6.0\times 10^{-2}/\boldsymbol{\tau}$ and repeat reducing the stepsize by half on each iteration $t \in \{4.0\times 10^5,\, 4.5\times 10^5,\, 6.5\times 10^5\}$.

On Figure \ref{fig4}, we show the error $e_t$ measured by \eqref{error-et} for each iteration $t\geq \boldsymbol{\tau}$.
One sees that the error sequence $\{e_t\}$ computed by \eqref{eq-6-1} with each constant stepsize converges exponentially fast up to a certain error whose size depends on the used stepsize.
On the other hand, the sequence $\{x_t\}$ simulated by a cascading stepsize continues to approach to the optimal value.

\begin{figure}[htbp]
\begin{center}
\subfigure[$\boldsymbol{\tau}=10$ (Batch size: $20$)]{
\includegraphics[height=5.5cm]{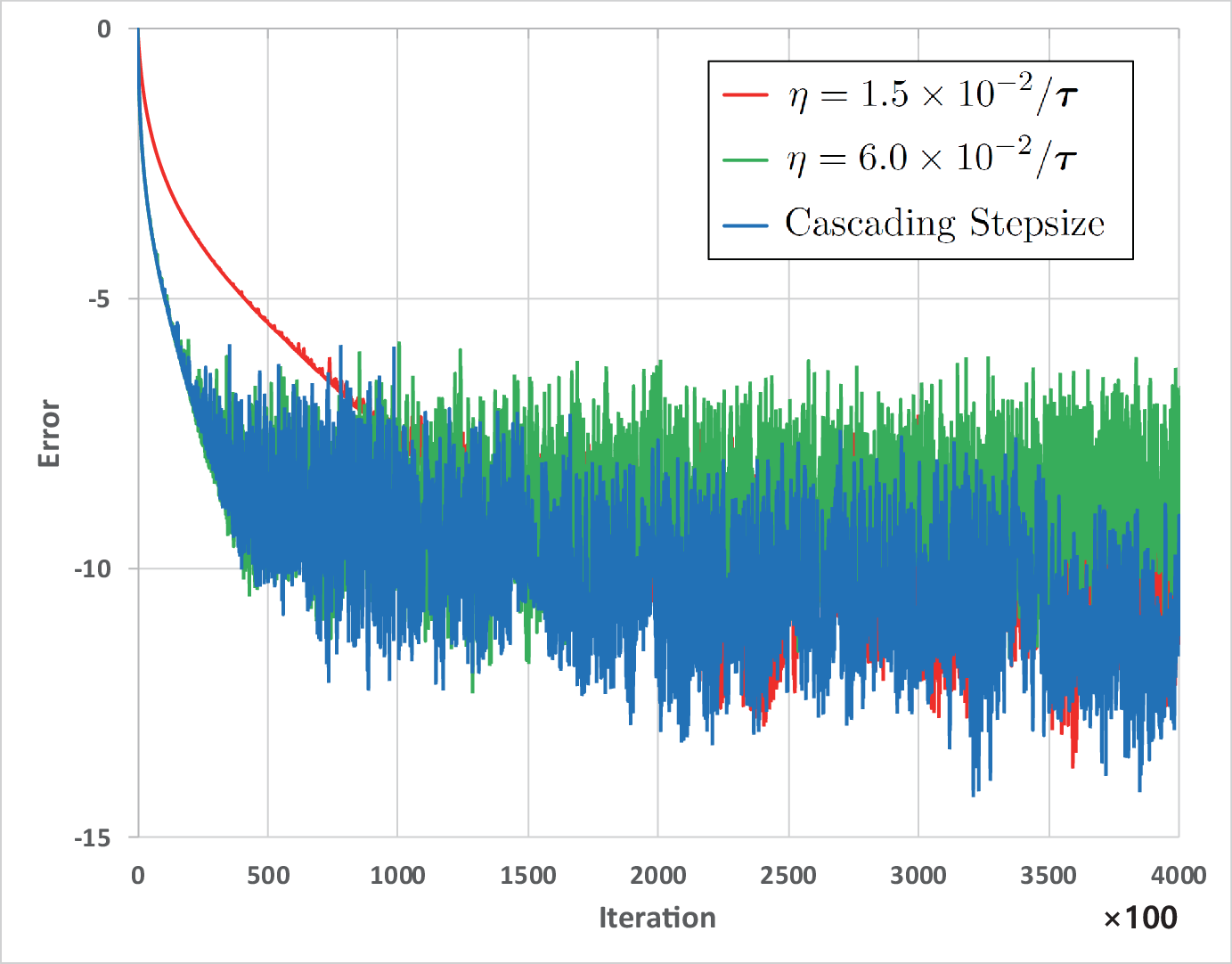}}
\quad
\subfigure[$\boldsymbol{\tau}=50$ (Batch size: $20$)]{
\includegraphics[height=5.5cm]{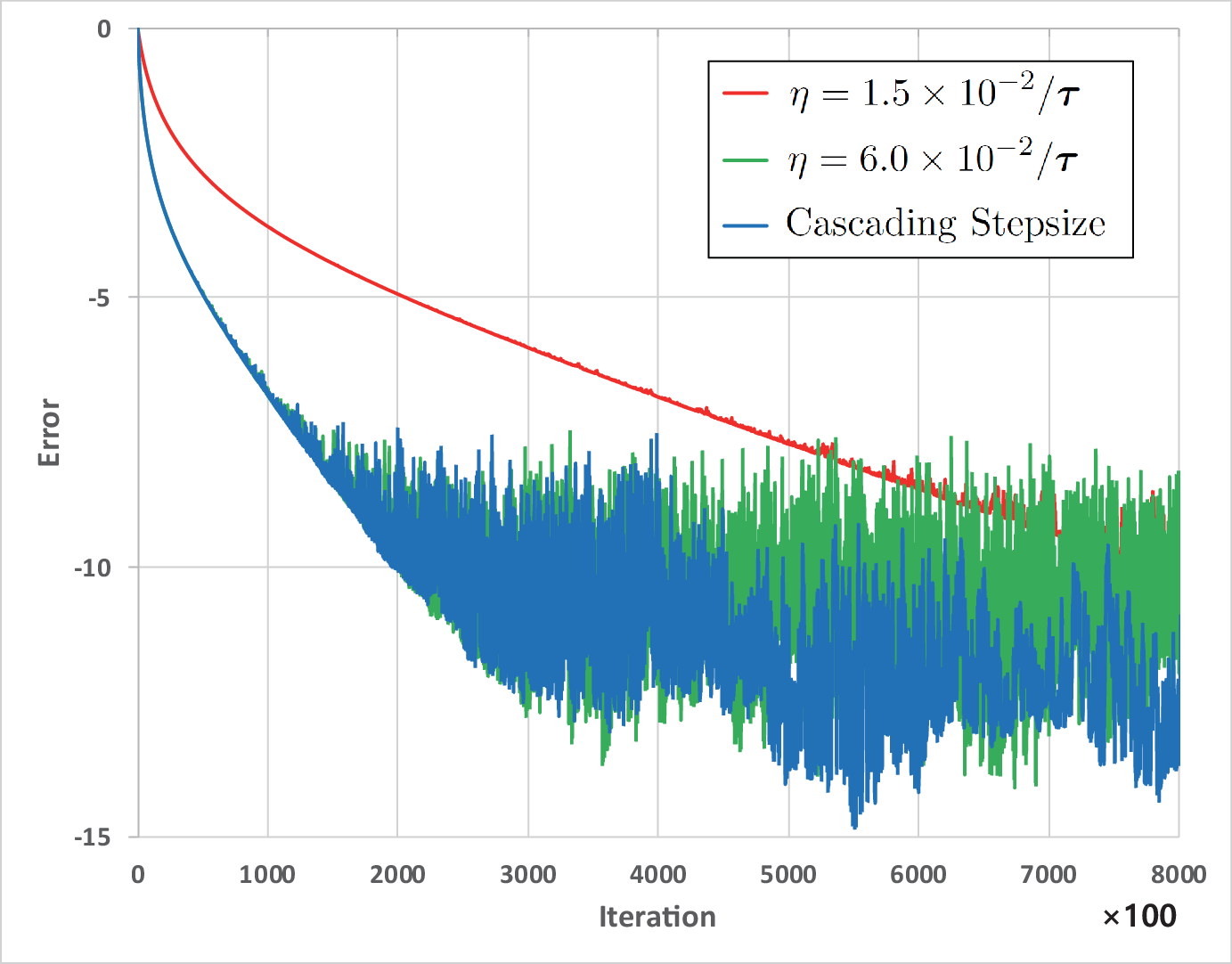}}\\
\subfigure[$\boldsymbol{\tau}=10$ (Batch size: $100$)]{
\includegraphics[height=5.5cm]{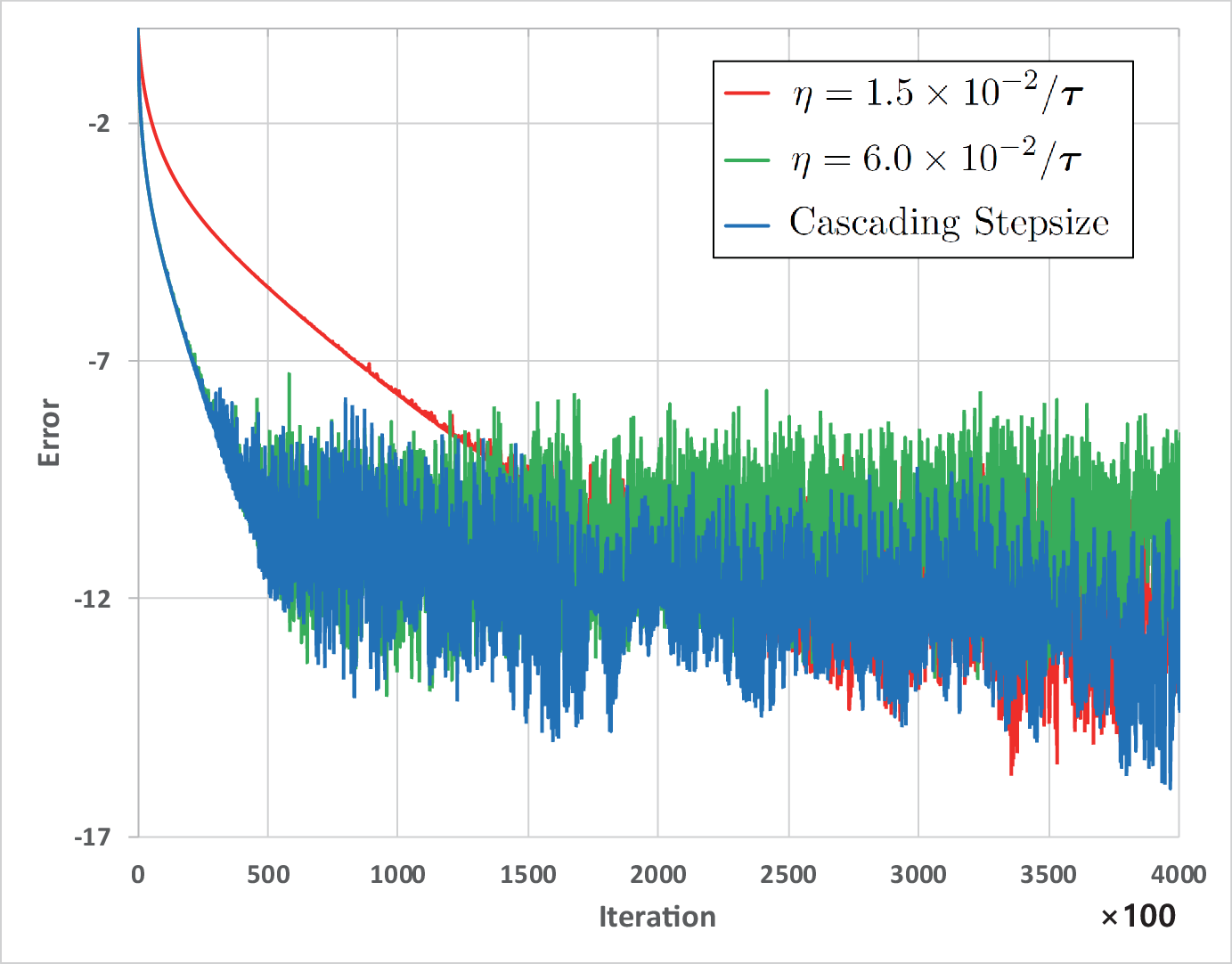}}
\quad
\subfigure[$\boldsymbol{\tau}=50$ (Batch size: $100$)]{
\includegraphics[height=5.5cm]{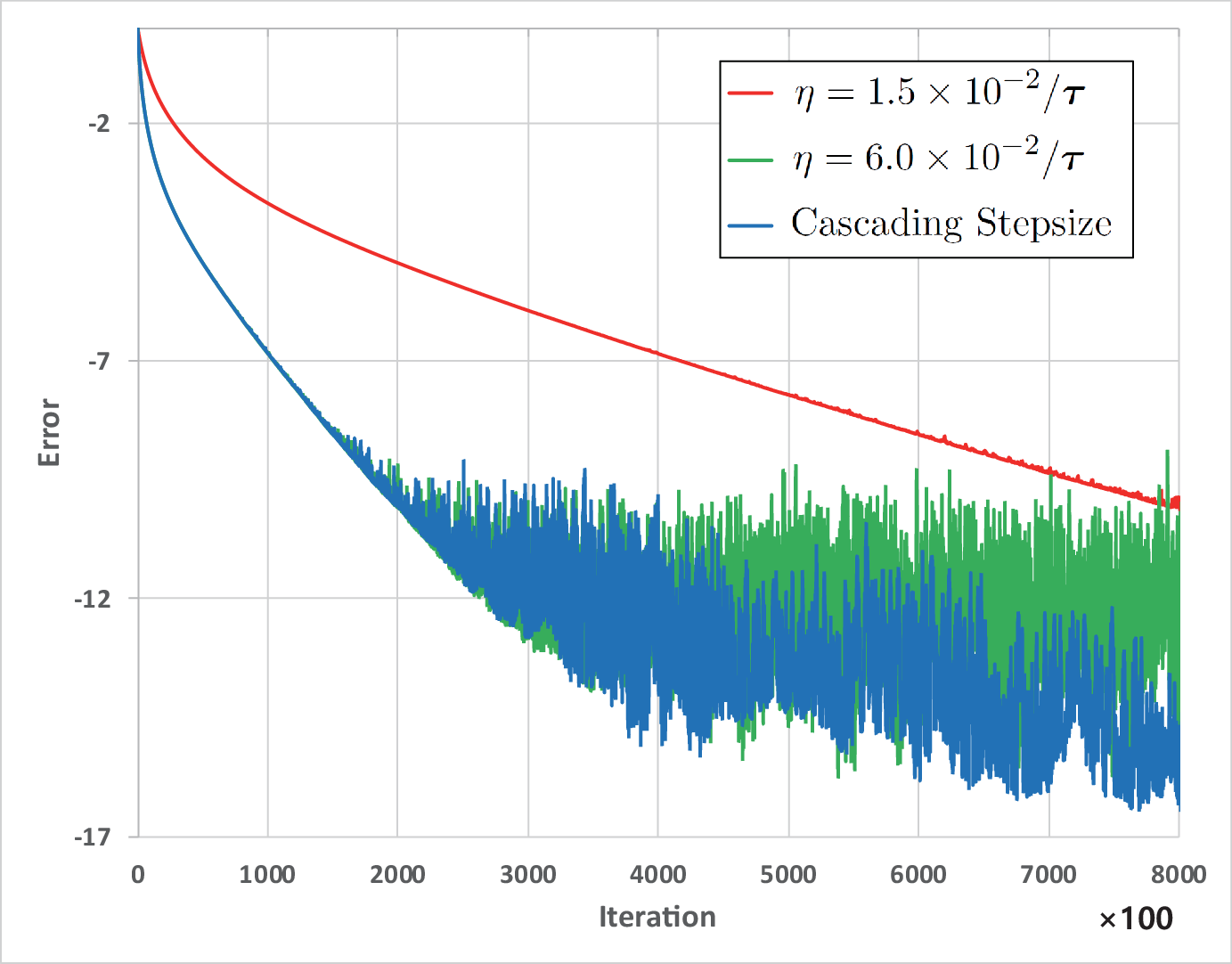}}
\end{center}
\caption{Graphs of the log-scaled cost error $e_t$ computed by the mini-batch stochastic gradient descent with various delay bounds $\boldsymbol{\tau}=10$, $50$.}\label{fig4}
\end{figure}
}

\vspace{0.2cm}

\section*{Conclusion} 
This paper establishes the linear convergence estimate for the gradient descent involving the delay, when the given cost function has the strongly convexity and $L$-smoothness. In addition, we give the analysis of linear convergence for the cost function error under the Polyak-{\L}ojasiewicz condition and extend the 
result to the stochastic gradient descent with time-varying delay.
The results are confirmed  by numerical experiment.

\vspace{0.2cm}

\section*{Acknowledgments}
The authors thank the anonymous referees for their helpful comments that improved the quality of the manuscript. H. J. Choi was supported by the National Research Foundation of Korea (grant RS-2023-00280065).
W. Choi was supported by  the National Research Foundation of Korea (grant NRF-2021R1F1A1059671).
\mbox{J. Seok} was supported by  the National Research Foundation of Korea (grant NRF-2020R1C1C1A01006415).

\vspace{0.2cm}


\begin{thebibliography}{20}

\bibitem{AD} {\sc A. Agarwal and J. C. Duchi},
{\em Distributed delayed stochastic optimization},
Advances in Neural Information Processing Systems {\bf 24}, 873-881, 2011.

\bibitem{ASS} {\sc Y. Arjevani, O. Shamir and N. Srebro},
{\em A tight convergence analysis for stochastic gradient descent with delayed updates},
Proceedings of Machine Learning Research {\bf 117}, 111-132, 2020.

\bibitem{AHSL} {\sc R. Aviv, I. Hakimi, A. Schuster, and K. Levy},
{\em Learning under delayed feedback: Implicitly adapting to gradient delays},
In Proceedings of the 38th International Conference on Machine Learning. PMLR, 2021.

\bibitem{B} {\sc S. Bubeck},
{\em Convex optimization: Algorithms and complexity},
Foundations and Trends in Machine Learning {\bf 8}, 231-357, 2015.

\bibitem{CB} {\sc X. Cao and T. Ba\c{s}ar},
{\em Decentralized online convex optimization with feedback delays},
IEEE Transactions on Automatic Control {\bf 67}(6), 2889-2904, 2022.

\bibitem{DG} {\sc D. Dua, C. Graff},  {UCI} Machine learning repository. https://archive.ics.uci.edu/ml/index.php. University of California, Irvine, School of Information and Computer Sciences, May
27, 2017. (Cited: August 27, 2019) http://archive.ics.uci.edu/ml.

\bibitem{ISA} {\sc R. Islamov, M. Safaryan, and D. Alistarh},
{\em AsGrad: A Sharp Unified Analysis of Asynchronous-SGD Algorithms},
arXiv:2310.20452.


\bibitem{JWRBS} {\sc L. Ji, S. Wright, C. R\'e, V. Bittorf, and S. Sridhar}, {\em An asynchronous parallel stochastic coordinate descent algorithm}, The Journal of Machine Learning Research, {\bf 16}(1), 285-322, 2015.


\bibitem{JGS} {\sc P. Joulani, A. Gyorgy and C. Szepesv\'{a}ri},
{\em Online learning under delayed feedback},
Proceedings of Machine Learning Research {\bf 28}(3), 1453-1461, 2013.

\bibitem{KNS} {\sc H. Karimi, J. Nutini, and M. Schmidt},
{\em Linear convergence of gradient and proximal-gradient methods under the Polyak-{\L}ojasiewicz condition},
Joint European Conference on Machine Learning and Knowledge Discovery in Databases, 795-811, 2016.

\bibitem{KSJ} {\sc A. Koloskova, S. U. Stich and M. Jaggi},
{\em Sharper convergence guarantees for asynchronous SGD for distributed and federated learning},
Advances in Neural Information Processing Systems {\bf 35}, 17202-17215, 2022.

\bibitem{LSZ} {\sc J. Langford, A. J. Smola and M. Zinkevich},
{\em Slow learners are fast},
Advances in Neural Information Processing Systems {\bf 22}, 2331-2339, 2009.

\bibitem{LZL} {\sc J. Lin and D.-X. Zhou},
{\em Learning theory of randomized Kaczmarz algorithm},
Journal of Machine Learning Research {\bf 16}, 3341-3365, 2015.

\bibitem{LLXWZ} {\sc P. Liu, K. Lu, F. Xiao, B. Wei and Y. Zheng},
{\em Online distributed learning for aggregative games with feedback delays},
IEEE Transactions on Automatic Control, 1-8, 2023.

\bibitem{NBB} {\sc A. Nedi\'c, D. P. Bertsekas and V. S. Borkar},
{\em Distributed asynchronous incremental subgradient methods},
Studies in Computational Mathematics {\bf 8}(C), 381-407, 2001.

\bibitem{QK} {\sc K. Quanrud and D. Khashabi},
{\em Online learning with adversarial delays},
Advances in Neural Information Processing Systems {\bf 28}, 1270-1278, 2015.

\bibitem{SBBLMO} {\sc K. Scaman, F. Bach, S. Bubeck, Y. T. Lee and L. Massouli\'{e}},
{\em Optimal algorithms for smooth and strongly convex distributed optimization in networks},
Proceedings of Machine Learning Research {\bf 70}, 3027-3036, 2017.

\bibitem{SK} {\sc S. U. Stich and S. P. Karimireddy},
{\em The error-feedback framework: Better rates for SGD with delayed gradients and compressed updates},
Journal of Machine Learning Research {\bf 21}(1), 9613-9648, 2020.

\bibitem{SYL} {\sc S. Sra, A. W. Yu, M. Li and A. J. Smola},
{\em AdaDelay: Delay adaptive distributed stochastic convex optimization},
Proceedings of Machine Learning Research {\bf 51}, 957-965, 2016.

\bibitem{SS} {\sc O. Shamir and L. Szlak},
{\em Online learning with local permutations and delayed feedback},
Proceedings of Machine Learning Research {\bf 70}, 3086-3094, 2017.

\bibitem{SY} {\sc B. Sirb and X. Ye},
{\em Decentralized consensus algorithm with delayed and stochastic gradients},
SIAM Journal on Optimization {\bf 28}(2), 1232-1254, 2018.

\bibitem{WX} {\sc C. Wang and S. Xu},
{\em Distributed online constrained optimization with feedback delays},
IEEE Transactions on Neural Networks and Learning Systems, 1-13, 2022.

\bibitem{WTZ} {\sc Y. Wan, W.-W. Tu and L. Zhang},
{\em Online strongly convex optimization with unknown delays},
Machine Learning {\bf 111}, 871-893, 2022.

\bibitem{WYZF} {\sc L.-Y. Wei, Z. Yu and D.-X. Zhou},
{\em Federated learning for minimizing nonsmooth convex loss functions},
Mathematical Foundations of Computing {\bf 6}(4), 753-770, 2023.

\bibitem{XZYZC} {\sc M. Xiong, B. Zhang, D. Yuan, Y. Zhang and J. Chen},
{\em Event-triggered distributed online convex optimization with delayed bandit feedback},
Applied Mathematics and Computation {\bf 445}, 127865, 2023.

\bibitem{YB} {\sc Z. Yu},
{\em Block coordinate type methods for optimization and learning},
Analysis and Applications {\bf 21}, 777-817, 2023.

\bibitem{ZLLLH} {\sc L. Zhu, H. Lin, Y. Lu, Y. Lin, and S. Han}, {\em Delayed gradient averaging:
Tolerate the communication latency for federated learning}, Proc. Adv.
Neural Inf. Process. Syst., {\bf 34}, 29995-30007, 2021.


\end{thebibliography}
\end{document}